\titleformat{\section}{\centering\Large\bfseries}{\arabic{section}.}{1em}{}
\titleformat{\subsection}{\centering\large\bfseries}{\arabic{section}.\arabic{subsection}.}{1em}{}
\newtheorem{thm}{Theorem}[section]
\newtheorem{cor}[thm]{{Corollary}}
\newtheorem{lem}[thm]{{Lemma}}
\newtheorem{prop}[thm]{Proposition}
\newtheorem{conj}[thm]{{Conjecture}}
\newtheorem{ques}[thm]{{Question}}
\newtheorem{rmk}[thm]{Remark}
\theoremstyle{definition}
\newtheorem{defcon}[thm]{Definition-Construction}
\newtheorem{setup}[thm]{Setup}
\newtheorem*{acknowledgement*}{Acknowledgements}
\theoremstyle{remark}
\newtheorem*{rmk*}{Remark}
\theoremstyle{definition}
\numberwithin{equation}{section}
\newcommand{\bZ}{{\mathbb Z}}
\newcommand{\mE}{{\mathcal E}}
\newcommand{\mU}{{\mathcal U}}
\newcommand{\nc}{\newcommand}
\nc{\on}{\operatorname}
\nc{\Aut}  {{\on{\mathrm  {Aut}}}}
\nc{\End}  {{\on{\mathrm  {End}}}}
\nc{\Fil}  {{\on{\mathrm  {Fil}}}}
\nc{\Frac} {{\on{\mathrm  {Frac}}}}
\nc{\Gal}  {{\on{\mathrm  {Gal}}}}
\nc{\GL}   {{\on{\mathrm  {GL}}}}
\nc{\Gr}   {{\on{\mathrm  {Gr}}}}
\nc{\Hom}  {{\on{\mathrm  {Hom}}}}
\nc{\id}   {{\on{\mathrm  {id}}}}
\nc{\PGL}  {{\on{\mathrm  {PGL}}}}
\nc{\rank} {{\on{\mathrm  {rank}}}}
\nc{\rmd}  {{\on{\mathrm  {d}}}}
\nc{\Spec} {{\on{\mathrm  {Spec}}}}
\nc{\HDF}  {{\on{\mathcal  {HDF}}}}
\nc{\HIG}  {{\on{\mathcal  {HIG}}}}
\nc{\IC}   {{\on{\mathcal {IC}}}}
\nc{\MCF}  {{\on{\mathcal {MCF}}}}
\nc{\MCFa} {{\on{\mathcal {MCF}_{[0,a]}}}}
\nc{\MF}   {{\on{\mathcal {MF}}}}
\nc{\MFa}  {{\on{         \MF_{[0,a]}}}}
\nc{\MFaf} {{\on{         \MF_{[0,a],f}}}}
\nc{\MIC}  {{\on{\mathcal {MIC}}}}
\nc{\MICa} {{\on{\mathcal {MIC}_{[0,a]}}}}
\nc{\THDF} {{\on{\mathcal {THDF}}}}
\nc{\THDFa}{{\on{\mathcal {THDF}_{[0,a]}}}}
\nc{\TMF}  {{\on{\mathcal {TMF}}}}
\nc{\TMFa} {{\on{\TMF_{[0,a]}}}}
\nc{\TMFaf}{{\on{\TMF_{[0,a],f}}}}
\nc{\tMIC} {{\on{\widetilde{\mathcal{MIC}}}}}
\nc{\FIsoc} {{\on{\textbf{F-Isoc}}}}
\def\fai{\varphi_{\lambda,\mathbb{F}_{p}}}
\def\nilp{\mathrm{nilp}}
\numberwithin{equation}{thm}
\begin{document}
\title{ Finiteness of logarithmic crystalline representations} 

\author{Raju Krishnamoorthy}
\email{raju@uga.edu}  
\address{Department of Mathematics, University of Georgia, Athens, GA 30605, USA}
\author{Jinbang Yang}
\email{yjb@mail.ustc.edu.cn}
\address{Institut f\"ur Mathematik, Universit\"at Mainz, Mainz 55099, Germany}
\author{Kang Zuo}
\email{zuok@uni-mainz.de}
\address{Institut f\"ur Mathematik, Universit\"at Mainz, Mainz 55099, Germany}

\maketitle
 	
\begin{abstract} Let $K$ be an unramified $p$-adic local field and let $W$ be the ring of integers of $K$. Let $(X,S)/W$ be a smooth proper scheme together with a normal crossings divisor. We show that there are only finitely many log crystalline $\bZ_{p^f}$-local systems over $X_K\setminus S_K$ of given rank and with geometrically absolutely irreducible residual representation, up to twisting by a character. The proof uses $p$-adic nonabelian Hodge theory and a finiteness result due Abe/Lafforgue.
\end{abstract}
 
	
\section{Introduction} 
To state our main theorem, the following setup will be convenient.
\begin{setup}\label{setup}Let $p$ be an odd prime, let $f\geq 1$ be a postive integer, and let $k$ be a finite field  containing $\mathbb F_{p^f}$. Let $W=W(k)$ be the ring of Witt vectors and set $K=\mathrm{Frac}(W)$. Let $(X,S)/W$ be a smooth proper scheme together with a relative normal crossings divisor over $W$. Denote by $U=X\setminus S$ and by $\mathcal U_K$ the rigid analytification of $U_K=U\times_{\Spec W} \Spec K$.
\end{setup}
\begin{thm}\label{mainthmRep} Notation as in Setup \ref{setup}, and fix a positive integer $r\leq p-2$. Then there are only finitely many isomorphism classes of logarithmic crystalline representations $\rho:\pi_1(U_K)\rightarrow \GL_r(\bZ_{p^f})$ whose residual representation is geometrically absolutely irreducible, up to twisting by a character of $\text{Gal}(\bar{K}/K)$.
\end{thm}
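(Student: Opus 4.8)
The strategy is to use $p$-adic nonabelian Hodge theory to transport a logarithmic crystalline representation to a logarithmic Higgs/$F$-isocrystal object over the special fiber, where Abe's/Lafforgue's finiteness theorem applies, and then to argue that this transport loses only finitely much information (up to twist by a character of $\Gal(\bar K/K)$). Concretely, I would proceed as follows.

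\textbf{Step 1: From crystalline representations to filtered log-$F$-isocrystals.} Given a logarithmic crystalline representation $\rho\colon\pi_1(U_K)\to\GL_r(\bZ_{p^f})$, apply the logarithmic version of the crystalline comparison / Faltings' functor (or the Fontaine functor for the log-smooth pair $(X,S)/W$) to produce a logarithmic filtered $F$-isocrystal $(\mE,\nabla,\Fil^\bullet,\Phi)$ on $X_k$ with log poles along $S_k$, of rank $r$, with nilpotent residues, and with Hodge–Tate weights in $[0,r]\subseteq[0,p-2]$. The bound $r\le p-2$ is exactly what is needed so that the filtered module lies in the range where the comparison is an equivalence onto its essential image (the ``Fontaine–Laffaille'' range), and so that the associated graded Higgs bundle is $p$-torsion-free and behaves well. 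This step should be essentially a citation to the logarithmic Fontaine–Laffaille theory together with the fact that crystalline representations of $\pi_1(U_K)$ (in the sense used in the paper) correspond exactly to such filtered log-$F$-isocrystals.

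\textbf{Step 2: Forget the filtration; finiteness of the underlying log-$F$-isocrystal.} The underlying object $(\mE,\nabla,\Phi)$ is a logarithmic overconvergent $F$-isocrystal on $U_k$ of rank $r$ with finite (indeed trivial, up to twist) determinant and, by the irreducibility hypothesis on the residual representation, one can arrange that it is irreducible as an $F$-isocrystal (here one uses that geometric absolute irreducibility of the residual representation forces irreducibility after reduction, hence irreducibility of the isocrystal, perhaps after a controlled constant-field extension; this is where the ``geometrically absolutely irreducible residual'' hypothesis is used). By the finiteness theorem of Abe (building on Lafforgue's work for $\ell$-adic companions and the $p$-adic Langlands correspondence over function fields), there are only finitely many such logarithmic overconvergent $F$-isocrystals on $U_k$ of bounded rank, up to twist by a rank-one $F$-isocrystal coming from the constant field, i.e.\ up to the analogue of an unramified character. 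So the set of possible $(\mE,\nabla,\Phi)$ is finite modulo twist.

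\textbf{Step 3: Bounding the filtration data.} It remains to show that each fixed underlying log-$F$-isocrystal $(\mE,\nabla,\Phi)$ supports only finitely many filtrations $\Fil^\bullet$ giving rise to a crystalline $\bZ_{p^f}$-local system, up to twisting by a character of $\Gal(\bar K/K)$. A Hodge filtration on $\mE$ is a point of a product of Grassmannians $\prod \Gr(\mE)$, which is of finite type over $W$; the constraint of being a lattice (integrality, i.e.\ defined over $W$ rather than $K$) together with Griffiths transversality with respect to $\nabla$ cuts this down to finitely many $W$-points in each residue class, and twisting by a Hodge–Tate character of $\Gal(\bar K/K)$ acts on these by shifting weights, which absorbs the remaining ambiguity. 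One must also check that two crystalline representations with isomorphic associated filtered log-$F$-isocrystal are isomorphic up to twist — this is the full faithfulness part of the comparison in the Fontaine–Laffaille range. Combining Steps 1–3, the set of crystalline representations of rank $r\le p-2$ with geometrically absolutely irreducible residual representation is finite up to twisting by a character of $\Gal(\bar K/K)$.

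\textbf{Main obstacle.} The hard part is Step 2: making precise the passage from ``geometrically absolutely irreducible residual representation'' to ``irreducible logarithmic overconvergent $F$-isocrystal on $U_k$ over the right base field'', and checking that the logarithmic, non-proper setting genuinely falls within the scope of Abe's finiteness theorem (which requires care about the choice of smooth compactification, the nilpotence/log structure along the boundary, and the bookkeeping of constant-field twists versus Galois twists). A secondary technical point is controlling exactly how the crystalline-to-filtered-isocrystal dictionary interacts with twisting by characters of $\Gal(\bar K/K)$ on the one hand and with Tate twists / constant $F$-isocrystal twists on the special fiber on the other, so that the two notions of ``up to twist'' match up. The remaining steps are, by comparison, standard applications of Fontaine–Laffaille theory and finiteness of Grassmannians over $W$.
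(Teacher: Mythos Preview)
Your overall strategy---pass from crystalline representations to Fontaine--Faltings modules, forget to an overconvergent $F$-isocrystal, apply Abe/Lafforgue finiteness, then control the forgotten data up to twist---is the same as the paper's. But there are two genuine gaps.

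First, a minor one: you never reduce to the case where $X/W$ is a relative curve. Abe's finiteness theorem is for curves over finite fields, so a Lefschetz-type step (the paper uses Poonen's Bertini to find a smooth curve $C\subset X$ meeting $S$ transversally with $\pi_1((C\cap U)_K)\twoheadrightarrow\pi_1(U_K)$) is needed before Step~2.

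Second, and this is the real issue: your Step~3 is not correct as written, and it is where most of the actual work lies---not Step~2, which is essentially a citation. Your claim that ``finiteness of Grassmannians over $W$'' together with Griffiths transversality cuts the possible filtrations down to a finite set is false: a Grassmannian over $W$ is proper, so its $W$-points and $K$-points coincide, and Griffiths transversality cuts out a closed subscheme which still typically has infinitely many $W$-points. There is no finiteness coming from this direction. You also do not address the choice of Frobenius structure $\Phi$ at all; two Fontaine--Faltings modules with the same $(V,\nabla,\Fil)$ but different $\Phi$ can give different representations.

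The paper's mechanism for Step~3 is quite different and hinges on \emph{stability}. The geometric absolute irreducibility of the residual representation is used not, as you suggest, to prove irreducibility of the $F$-isocrystal directly, but to show that every term $(E,\theta)^{(i)}_1$ and $(V,\nabla)^{(i)}_1$ in the associated periodic Higgs--de Rham flow is \emph{stable} modulo $p$ (this is a nontrivial argument running a sub-flow and using finiteness of subbundles over a finite field). Stability of the graded then forces the Griffiths-transverse filtration on $(V,\nabla)$ to be \emph{unique} up to index shift (a result of Lan--Sheng--Zuo and Langer). One then propagates an isomorphism of $F$-isocrystals through the entire $f$-periodic flow using inverse Cartier, and stability again forces the periodicity isomorphism $\Phi$ to be unique up to a scalar in $W^\times$. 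That scalar and the index shifts are exactly what is absorbed by twisting by a rank-one filtered $\varphi^f$-module over $W$, i.e.\ by a crystalline character of $\Gal(\bar K/K)$. So the Higgs--de Rham flow, which you do not mention, is the essential technical tool, and the residual irreducibility hypothesis enters through stability, not through irreducibility of the isocrystal.
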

For clarity, the statement that the residual representation of $\rho$ is geometrically absolutely irreducible means that the composite representation $$\overline{\rho}\colon \pi_1(U_{\bar K})\rightarrow \pi_1(U)\rightarrow \GL_r(\bZ_{p^f})\rightarrow \GL_r(\mathbb{F}_{p^f})\rightarrow \GL_r(\bar{\mathbb F}_p)$$ of the geometric fundamental group is irreducible. Crystalline representations are a $p$-adic analog of polarized variations of Hodge structures. Therefore Theorem \ref{mainthmRep} is an arithmetic analog of a theorem of Deligne \cite{Del87}. See also the very recent work of Litt for a finiteness result in a different spirit \cite{Li18}.

The proof of Theorem \ref{mainthmRep} implicitly relies on the work of Abe on a global $p$-adic Langlands correspondence for smooth curves over finite fields, which itself follows the work of Lafforgue on the $l$-adic global Langlands correspondence for smooth curves over finite fields. We now describe the sections of this note.
\begin{itemize}
\item In Section \ref{section:preliminaries}, we explain the preliminary material.
\item In Section \ref{section:proof}, we prove Theorem \ref{mainthmRep} by reducing it to a statement about Higgs-de Rham flows.
\item In Section \ref{section:nonexample}, we show that Theorem \ref{mainthmRep} is false if $k\cong \overline{\mathbb F}_p$ in Setup \ref{setup}.
\item In Section \ref{section:uniform}, we speculate on a uniform upper bound of the number of crystalline representations.
\end{itemize}

\section{Preliminaries}\label{section:preliminaries}
We briefly describe the main players, with notation as in Setup \ref{setup}. Recall that a logarithmic Fontaine-Faltings module over $(X,S)$ is quadruple $(V,\nabla,\Fil,\Phi)$, where
\begin{itemize}
\item $V$ is a vector bundle on $X$;
\item $\nabla: V\rightarrow V\otimes \Omega_{X/W}^1(\log S)$ is an integrable connection on $X$ with logarithmic poles along $S$;
\item  $\Fil$ is a filtration of $V$ by subbundles that is locally split and satisfies Griffith transversality; and
\item $\Phi$ is a strongly divisible Frobenius structure that is horizontal with respect to $\nabla$.
\end{itemize}
There are several perspectives on the $\Phi$-structure. The original perspective, proposed by Faltings, is to work locally: one defines a strongly divisible $\Phi$ structure on $(V,\nabla,\Fil)$ over a small affine, whose $p$-adic completion admits a Frobenius lift, and then one glues \cite{Fal89}. A second view, via nonabelian Hodge theory, only works when the level of $(V,\nabla,\Fil,\Phi)$ is no greater than $p-2$, but it is a global description. The key to this perspective is the $p$-adic inverse Cartier transform defined by Lan-Sheng-Zuo~\cite{LSZ13a}; then $\Phi$ is an isomorphism 
\[\Phi: C^{-1}(\Gr(V),\Gr(\nabla),V,\nabla,\Fil) \overset{\sim}{\longrightarrow} (V,\nabla).\]
Finally, by forgetting the filtration, a logarithmic Fontaine-Fontaine module yields a logarithmic $F$-crystal in finite, locally free modules on the logarithmic crystalline site of $(X_k,S_k)/W$.

Let $\MF_{[0,a],f}^\nabla((X,S)/W)$ be the category of Fontaine-Faltings modules $\{(V,\nabla,\Fil,\Phi,\iota)\}$ over $(X,S)/W$ with Hodge-Tate weights in $[0,a]$ and endomorphism structure $\iota\colon W(\mathbb{F}_{p^f})\rightarrow \text{End}_{\MF}(V,\nabla,\Fil,\Phi)$. If $a\leq p-2$, then the fundamental work of Fontaine-Lafaille-Faltings constructs a fully faithful functor to the category of $\GL(\mathbb Z_{p^f})$ logarithmic crystalline local systems over $U_K$ and with unipotent local monodromy around $S_K$ \cite{Fal89}.\footnote{The nilpotence of the residues of the connection may be seen as follows: the Higgs field is nilpotent and the  de Rham bundle comes from inverse Cartier transform.}
Let $\FIsoc_{\nilp}^{\log}(X_k)$ be the category of convergent log-$F$-isocrystals on $(X_k,S_k)$ with nilpotent residues around $S_k$. Let $\FIsoc^\dagger(U_k)$ be the category of overconvergent $F$-isocrystals over $U_k$. These are both $\mathbb{Q}_p$-linear Tannakian categories.

Recall that a convergent $F$-isocrystal over $U_k = X_k\setminus S_k$ can be realized as an vector bundle $\mE$ on $\mU_K$ equipped with an integrable connection together with an isomorphism $\sigma^*\mE\simeq\mE$. By forgetting Hodge filtration, tensoring with $\mathbb Q_p$ and then  restricting  to $\mathcal U_K$, one obtains a functor
\[\psi_1\colon \MF_{[0,a],f}^\nabla((X,S)/W) \rightarrow \FIsoc^{\log}_{\nilp}(X_k)_{\mathbb{Q}_{p^f}}.\]
(The notation on the right hand side refers to the $\mathbb{Q}_{p^f}$-linearization of $\FIsoc^{\log}_{\nilp}(X_k)$.) By a fundamental theorem of Kedlaya, the forgetful functor 
 \[\psi_2\colon \FIsoc^{\log}_{\nilp}(X_k) \rightarrow \FIsoc^\dagger(U_k)\]
 is fully faithful  \cite[Proposition 6.3.2]{kedlayasemistableI}.

For the definition of a \emph{tame} $F$-isocrystal, see \cite[1.2]{abeesnault}; in our case, an object $\mathcal{E}\in \FIsoc^{\dagger}(U_k)$ is tame if $\mathcal{E}$ extends to a logarithmic $F$-isocrystal on $(X_k,S_k)$. (In particular, there is no condition on the residues around $S_k$.) The following fundamental theorem follows from work of Abe and Lafforgue and is a consequence of the Langlands correspondence \cite{abelanglands,lafforguelanglands}.

\begin{thm}\label{finiteness} Let $k$ be a finite field of characteristic $p$, let $U_k$ be a smooth curve over $k$. Then there are finitely many isomorphism classes of irreducible tame objects of $\FIsoc^{\dagger}(U_k)_{\overline{\mathbb Q}_p}$ of bounded rank, up to twists by rank 1 objects. As a consequence, if $L/\mathbb{Q}_p$ is a \emph{finite} extension, then there are finitely many isomorphism classes of irreducible tame objects of $\FIsoc^{\dagger}(U_k)_{L}$ with finite order determinant.
\end{thm}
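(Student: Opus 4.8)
The plan is to transport both assertions through Abe's $p$-adic global Langlands correspondence and then appeal to the finite-dimensionality of spaces of cusp forms over a global function field. (In the second assertion ``irreducible'' must be read as \emph{absolutely} irreducible; for merely $L$-irreducible objects the statement is false, as one sees by applying Weil restriction of coefficient fields to suitable rank-$1$ objects.) The first ingredient is the translation of tameness into a conductor bound: an object $\mathcal E\in\FIsoc^\dagger(U_k)$ of rank $r$ is tame precisely when its local monodromy at each point of $X_k\setminus U_k$ has vanishing Swan conductor, where $X_k$ is the smooth compactification of $U_k$; in that case its Artin conductor is an effective divisor supported on $X_k\setminus U_k$ of degree at most $r\cdot\#(X_k\setminus U_k)$. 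Thus ``tame of bounded rank'' is equivalent to ``bounded rank together with conductor dividing a fixed effective divisor $\mathfrak n$''.

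Next I would invoke Abe's correspondence \cite{abelanglands} --- which rests on Lafforgue \cite{lafforguelanglands} and on the theory of companions --- in the following shape. Writing $F$ for the function field of $X_k$, there is a bijection, compatible with the local Langlands correspondence at every place, between cuspidal automorphic representations $\pi$ of $\GL_r(\mathbb A_F)$ with finite-order central character and absolutely irreducible objects of $\FIsoc^\dagger(U_k)_{\overline{\mathbb Q}_p}$ of rank $r$ with finite-order determinant; under it the conductor of $\pi$ at a place equals the Artin conductor of the isocrystal there, the central character of $\pi$ corresponds to the determinant of the isocrystal, twisting by idele class characters corresponds to twisting by rank-$1$ objects, and the field generated by the Hecke eigenvalues of $\pi$ coincides with the field generated by the Frobenius traces of the isocrystal. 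Combining this with the conductor bound, the first assertion reduces to: there are finitely many cuspidal $\pi$ of $\GL_r$ over $F$ of conductor dividing $\mathfrak n$ up to twist by idele class characters; and the second reduces to: for a fixed finite $L/\mathbb Q_p$ there are finitely many cuspidal $\pi$ of conductor dividing $\mathfrak n$ with finite-order central character and Hecke eigenvalues in $L$ (an absolutely irreducible $L$-object with finite determinant being the unique, by Hilbert~90, $L$-form of the $\overline{\mathbb Q}_p$-object attached to its $\pi$).

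For both reductions I would use the classical finiteness statement that, for a fixed compact open level $K$ and a fixed central character $\omega$, the space of $\overline{\mathbb Q}_p$-valued cusp forms on $\GL_r(\mathbb A_F)$ is finite-dimensional (cusp forms over a function field are supported on a subset that is compact modulo centre and rational points); since each cuspidal $\pi$ of conductor dividing $\mathfrak n$ and central character $\omega$ contributes a nonzero space of $K_1(\mathfrak n)$-invariants, and distinct $\pi$ contribute linearly independent subspaces, there are only finitely many such $\pi$ for each $\omega$. For the first reduction, every cuspidal $\pi$ of conductor dividing $\mathfrak n$ can be twisted by an idele class character of conductor dividing $\mathfrak n$ to acquire a finite-order central character: the characters of $\operatorname{Pic}^{\mathfrak n}(X_k)\cong\mathbb Z\times(\text{finite group})$ form, modulo finite-order characters, a divisible group, while $\omega_\pi$ has conductor dividing $\mathfrak n$; hence only finitely many twist classes occur. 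For the second reduction, a finite extension $L/\mathbb Q_p$ contains only finitely many roots of unity, so there are only finitely many finite-order characters of $\operatorname{Pic}^{\mathfrak n}(X_k)$ with values in $L^\times$; the central character of a $\pi$ with Hecke eigenvalues in $L$ is one of these, and the finiteness follows by applying the finite-dimensionality statement to each.

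The genuinely deep inputs --- the Langlands correspondences of Lafforgue and Abe, the theory of companions, and the finiteness of cuspidal cohomology --- are used as black boxes; the remaining work is organisational. The two delicate points are, first, identifying tameness with a conductor bound, and second (and mainly), quoting Abe's correspondence in precisely the form used above: with its local--global compatibility at the ramified places, with its behaviour under twisting, with the identification of determinants and central characters, and with its compatibility with coefficient fields (including the statement that an object of $\FIsoc^\dagger(U_k)_{\overline{\mathbb Q}_p}$ whose Frobenius traces lie in $L$ is already definable over $L$). I expect matching these compatibilities to the literature to be the main obstacle; granted them, the finiteness is immediate.
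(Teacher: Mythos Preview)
The paper does not prove this theorem; it is quoted as a black-box consequence of the Langlands correspondence of Abe and Lafforgue, so there is no proof in the paper to compare against. Your sketch is the standard way to extract the statement from those references, and the architecture is sound: tameness bounds the Artin conductor uniformly in terms of the rank and the cardinality of $X_k\setminus U_k$; Abe's correspondence matches absolutely irreducible tame objects of finite-order determinant with cuspidal automorphic representations of bounded conductor and finite-order central character; and Harder's finite-dimensionality of cusp forms at fixed level and fixed central character then controls the count.

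One step in your first reduction is underjustified. You twist every $\pi$ to have finite-order central character and then write ``hence only finitely many twist classes occur,'' but there are infinitely many finite-order characters of $\operatorname{Pic}^{\mathfrak n}(X_k)$ (the generator of the $\mathbb Z$-factor can go to any root of unity), so this does not yet follow. What your divisibility remark actually buys is that the group of characters of $\operatorname{Pic}^{\mathfrak n}(X_k)$ modulo $r$-th powers is \emph{finite}; hence, after twisting, the central character can be forced into a fixed finite set of representatives, and one then applies Harder's finiteness once for each representative. With that amendment the first reduction is complete. Your second reduction is correct as written, and your parenthetical caveat that ``irreducible'' in the second assertion must be read as \emph{absolutely} irreducible is well taken: inducing rank-$1$ objects from a quadratic coefficient extension $L'/L$ whose norm to $L$ is a root of unity already produces infinitely many $L$-irreducible rank-$2$ tame objects with finite-order determinant.
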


\section{The proof}\label{section:proof}


For any non-negative integer $a\leq p-2$, the category of logarthmic crystalline representations with Hodge-Tate weights in $[0,a]$ is equivalent to the category of logarithmic Fontaine-Faltings modules of level $\leq a$ via Faltings' $\mathbb{D}$-functor. To prove Theorem \ref{mainthmRep}, we first use a Lefschetz-style theorem to reduce to the curve case.
\begin{lem}\label{lemma:lefschetz}Notation as in Setup \ref{setup}. Then there exists a smooth projective relative curve $C\subset X$ over $W$ that intersects $S$ transversely, with the property that $\pi_1(C_K\cap U_K)\rightarrow \pi_1(U_K)$ is surjective. Therefore, to prove Theorem \ref{mainthmRep}, it suffices to consider the case when $X/W$ has relative dimension 1.
\end{lem}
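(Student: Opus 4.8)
The plan is to apply a Bertini-type argument over $W$, carefully handling both the relative projective structure and the divisor $S$. First I would fix a projective embedding $X\hookrightarrow \bP^N_W$ (possible since $X/W$ is smooth proper; after passing to a twist of a very ample line bundle we may assume it exists, or simply work with a chosen relatively very ample bundle), and consider linear sections. The goal is to produce a complete intersection curve $C\subset X$, flat over $W$, with $C$ smooth over $W$, with $C$ meeting $S$ transversally (so that $C\cap S$ is a relative normal crossings divisor, in fact étale over $W$, inside $C$), and—crucially—so that the inclusion $U_K\cap C_K\hookrightarrow U_K$ induces a surjection on topological/étale fundamental groups.

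The key steps, in order: (1) Over the generic fibre $K$, invoke the classical Lefschetz hyperplane theorem for quasi-projective varieties (in the form due to Hamm--L\^e, or the étale version, or Goresky--MacPherson) to know that a sufficiently general linear section of dimension $1$ of the smooth quasi-projective variety $U_K$ has surjective $\pi_1$ onto $\pi_1(U_K)$; here one iterates hyperplane sections $\dim U_K - 1$ times, at each stage using that $U_K$ minus the section is affine (or an appropriate Lefschetz statement) to get the $\pi_1$-surjectivity. (2) Upgrade this to an integral statement: the locus of linear subspaces of the appropriate codimension in $(\bP^N_W)^\vee$-type parameter space for which the section is simultaneously smooth over $W$, transverse to $S$, and satisfies the generic-fibre $\pi_1$ condition is open and—by a standard spreading-out plus the fact that $W$ is a DVR with infinite residue field (it contains $\bF_{p^f}$, $f\ge 1$, so is infinite only if $k$ is infinite; in general $k$ is finite, so one must be slightly careful)—one chooses the section over $K$ first and then spreads it out, or better, chooses a $W$-point of the parameter scheme lying in the relevant open locus, using that this open locus is dense and $W$ is Henselian with the parameter scheme smooth, so it has a $W$-point whenever it has a $k$-point; if the residue field is too small one first makes a finite unramified base change $W\to W'$, which is harmless since the statement of Theorem \ref{mainthmRep} is insensitive to such a base change (it only enlarges the coefficient ring direction trivially, or one simply notes $k$ may be taken large). (3) Conclude that $C := $ the chosen relative curve works, and that any logarithmic crystalline representation of $\pi_1(U_K)$ is determined by its restriction to $\pi_1(C_K\cap U_K)$, reducing finiteness on $U_K$ to finiteness on the curve $C_K\cap U_K$; one checks that "logarithmic crystalline" and "residually geometrically absolutely irreducible" are both preserved and reflected under this restriction—the former because Fontaine--Faltings modules restrict, the latter because surjectivity of $\pi_1(C_{\bar K}\cap U_{\bar K})\to\pi_1(U_{\bar K})$ (which also follows, applying Lefschetz over $\bar K$) preserves irreducibility of the residual representation.

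I expect the main obstacle to be arranging all of the conditions \emph{simultaneously over $W$} rather than just over $K$: the generic-fibre Lefschetz theorem gives a single hyperplane (or linear subspace) over $\bar K$, but one needs it defined over $K$, one needs the resulting curve to have good reduction and to stay transverse to $S$ integrally, and one needs to know that the "$\pi_1$-surjective" locus is actually Zariski open (not merely dense) in the parameter space so that a $W$-point can be found. The openness of the $\pi_1$-surjectivity locus is the delicate point; the cleanest route is to not insist on openness of that exact locus but instead choose the linear section over $K$ inside the (open, dense) locus where smoothness and transversality hold and where the generic Lefschetz conclusion applies, then close up its Zariski closure in $X$ and check that, after possibly shrinking, the resulting model over $W$ is still smooth and transverse to $S$ on a neighbourhood of the closed fibre—using properness of $X/W$ and that these are open conditions on the (proper) total space. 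A secondary, more technical issue is the finiteness of the residue field $k$: if $k$ is too small there may be no $k$-point in the good locus of the parameter scheme; this is circumvented by first replacing $W$ by a finite unramified extension $W(k')$ with $k'$ large, which does not affect the validity of Theorem \ref{mainthmRep} (one recovers finiteness over $W$ from finiteness over $W(k')$ since restriction of representations along $\pi_1(U_{K'})\to\pi_1(U_K)$, for $K' = \mathrm{Frac}(W(k'))$, is compatible with the crystalline and residual-irreducibility conditions).
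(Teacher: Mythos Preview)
Your primary route---choose a good linear section over $K$, take its Zariski closure in $X$, and hope it remains smooth over $W$ ``after possibly shrinking''---does not work. Over the local base $\Spec W$ there is nothing to shrink: if the closure $D\subset X$ of a smooth $D_K$ has singular special fibre $D_1$, you are simply stuck, and this can certainly happen for a section chosen only with the generic fibre in mind. Smoothness of $D\to\Spec W$ being open on the total space $D$ does not help, since the singular locus sits inside the closed fibre and cannot be excised from a proper scheme.

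The paper runs the argument in the opposite direction, and this is where the real content lies. Fix an ample line bundle $L$ on $X$; for $m\gg 0$ the restriction $H^0(X,L^m)\to H^0(X_1,L_1^m)$ is surjective. Now apply Poonen's Bertini theorem over the \emph{finite} field $k$: for $m\gg 0$ there exists $s_1\in H^0(X_1,L_1^m)$ whose zero locus is smooth over $k$ and transverse to $S_1$. Any lift $s\in H^0(X,L^m)$ then has zero locus $D=V(s)$ which is automatically smooth over $W$ and transverse to $S$ (it is flat with smooth closed fibre, hence smooth by the fibral criterion). Iterate down to a curve. This completely sidesteps your worry about $k$ being too small: Poonen's theorem is precisely the device that produces the needed $k$-point in the good locus without any enlargement of $k$.

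Finally, you treat the $\pi_1$-surjectivity as a separate genericity condition to be arranged alongside smoothness and transversality. It is not: once $D_K\subset X_K$ is smooth, ample, and transverse to $S_K$, the map $\pi_1(D_K\cap U_K)\to\pi_1(U_K)$ is \emph{automatically} surjective by the tame Lefschetz theorem of Esnault--Kindler. So there is nothing further to impose on the generic fibre. Your Henselian-lifting alternative together with the enlarge-$k$ trick would eventually succeed, but it is a detour around Poonen's Bertini rather than a use of it.
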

\begin{proof}
We claim that there exists a smooth ample relative divisor $D\subset X$ over $W$ that intersects $S$ transversely. Indeed, pick some ample line bundle $L$ on $X$; then for all $m\gg 0$, the map $H^0(X,L^m)\rightarrow H^0(X_1,L^m_1)$ is surjective. On the other hand, for $m\gg 0$, the vector space $ H^0(X_1,L^m_1)$ has a section $s_1$ whose zero locus $V(s_1)$ is smooth and intersects $S_1$ transversely by Poonen's Bertini theorem \cite[Theorem 1.3]{poonenbertini}. Take any lift $s\in H^0(X,L^m)$ of $s_1$; then the zero locus $V(s)$ is smooth over $W$ and intersects $S$ transversely. Finally, it is well known that the map on fundamental groups $\pi_1(D_K\cap U_K)\rightarrow \pi_1(U_K)$ is surjective because $D_K\subset X_K$ is ample and $D_K$ intersects $S_K$ transversely (see \cite{EK15}). Proceed by induction. 

Now, as $\pi_1(C_K\cap U_K)\rightarrow \pi_1(U_K)$ is surjective, it follows that to prove Theorem \ref{mainthmRep}, it suffices to prove it for the pair $(C,S\cap C)$, i.e., we may reduce to the case of curves.
\end{proof}

We must now translate the property that the residual representation of $\rho$ is geometrically absolutely irreducible into a property about the objects of the associated Higgs-de Rham flow. This is accomplished by the following lemma.

\begin{lem}\label{stability} Notation as in Setup \ref{setup} and assume $X/W$ is a smooth curve. Let $\{(V,\nabla,\Fil,\Phi,\iota)_1\}$ be a logarithmic Fontaine-Faltings module over $(X,S)_1/k$ with Hodge-Tate weights in $[0,p-2]$ and endomorphism structure $\iota\colon \mathbb{F}_{p^f}\rightarrow \text{End}_{\MF}((V,\nabla,\Fil,\Phi)_1)$. Suppose the associated crystalline representation $\rho_1\colon \pi_1^{et}(X_K)\rightarrow \GL_r(\mathbb{F}_{p^f})$ is geometrically absolutely irreducible, i.e., the composite representation
	$$\overline{\rho}\colon \pi_1(X_{\bar K})\rightarrow \GL_r(\bar{\mathbb{F}}_p)$$
	is irreducible. Let 
	\begin{equation}\label{HDF_1} \tiny
	\xymatrix{  
		& (V,\nabla,\Fil)^{(0)}_1 \ar[dr] 
		&& (V,\nabla,\Fil)^{(1)}_1\ar[dr] 
		&& \cdots  \\
		(E,\theta)^{(0)}_1 \ar[ur]
		&& (E,\theta)^{(1)}_1 \ar[ur]
		&& (E,\theta)^{(2)}_1 \ar[ur]\\
	}
	\end{equation}		
	be the $f$-periodic Higgs-de Rham flow associated to $\{(V,\nabla,\Fil,\Phi,\iota)_1\}$.
	Then $(V,\nabla)^{(i)}_1$ and $(E,\theta)^{(i)}_1$ are all stable.
\end{lem}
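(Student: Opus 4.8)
The plan is to deduce stability of the Higgs bundles $(E,\theta)^{(i)}_1$ and the de Rham bundles $(V,\nabla)^{(i)}_1$ from the irreducibility of $\overline{\rho}$ by a slope/semicontinuity argument, exploiting the fact that the whole flow is Frobenius-periodic. First I would recall that, since the flow has period $f$ and is defined over the finite field $k$ (with $\mathbb{F}_{p^f}$-endomorphism structure), the Fontaine–Faltings module is equivalent, via the Lan–Sheng–Zuo correspondence, to the $\mathbb{Z}_{p^f}$-local system (equivalently the mod $p$ representation $\overline{\rho}$); so I should work with the full chain at once rather than one term. The key observation is that each $(V,\nabla)^{(i)}_1$ carries the integrable connection $\nabla$ with log poles, and $(E,\theta)^{(i)}_1 = \Gr(V,\nabla,\Fil)^{(i-1)}_1$ is its associated graded for the Hodge filtration, so the slopes of subobjects are constrained on both sides of the flow.

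The heart of the argument: suppose some $(V,\nabla)^{(i)}_1$ were \emph{not} stable, i.e. admitted a $\nabla$-invariant sub-bundle $V'\subsetneq V^{(i)}_1$ with $\mu(V')\geq \mu(V^{(i)}_1)$ (or, for semistability failure, a destabilizing quotient). A $\nabla$-invariant sub-bundle of maximal slope (the first piece of the Harder–Narasimhan–Simpson filtration, or the socle in the flat category) is canonical, hence is preserved by the $\mathbb{F}_{p^f}$-action $\iota$ and — crucially — is carried through the inverse Cartier transform $C^{-1}$ and the grading operation to a corresponding canonical sub-object at every stage of the $f$-periodic flow. Tracking it around one full period produces a Frobenius-stable sub-local-system of the $\mathbb{F}_{p^f}$-representation attached to the flow, i.e. a $\pi_1(X_K)$-subrepresentation of $\rho_1$; base-changing to $\bar K$ it gives a $\pi_1(X_{\bar K})$-subrepresentation of $\overline\rho$, contradicting geometric absolute irreducibility. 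For the Higgs side one argues identically, using that $\theta$-invariant sub-Higgs-sheaves of extremal slope are canonical and that $\Gr$ and $C^{-1}$ are inverse equivalences of (filtered) categories at level $\leq p-2$, so a destabilizing Higgs sub-object at stage $i$ propagates to a destabilizing flat sub-object at stage $i+1$ and hence, around the period, to a sub-representation. One also checks that the slope of the destabilizing piece is forced: since the local monodromy is unipotent and the parabolic degrees vanish, $\deg$ and $\mathrm{rank}$ are multiplicative/additive in the usual way and strict instability ($\mu(V') > \mu$) would already be impossible as the full bundle has a Frobenius structure forcing $\mu = 0$; so the real content is ruling out \emph{strictly semistable but not stable}, which is exactly where the canonicity of the socle and the irreducibility of $\overline\rho$ are used.

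The main obstacle I anticipate is the bookkeeping needed to make "a canonical sub-object of one term of the flow propagates to a sub-local-system" precise: one must verify that the maximal-destabilizing (or socle) sub-sheaf is respected by $C^{-1}$ and by $\Gr$ — this uses that these functors are exact equivalences on the relevant categories with level $\leq p-2$ and that they preserve degree (hence slope), which in the logarithmic setting requires care with parabolic structures and with the nilpotence of residues (guaranteed here by the hypotheses, cf. the footnote in Section~\ref{section:preliminaries}). A secondary subtlety is passing correctly between stability over $k$, over $\bar k$, and the geometric statement over $\bar K$: the endomorphism structure $\iota$ means "stable" should be interpreted relative to the $\mathbb{F}_{p^f}$-linear structure, and one needs that a $\pi_1(X)$-stable $\mathbb{F}_{p^f}$-subrepresentation remains $\pi_1(X_{\bar K})$-stable, which is where base change and the finiteness of $k$ enter. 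Once these compatibilities are in place, the contradiction is immediate and the lemma follows.
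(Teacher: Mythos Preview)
Your semistability argument (a positive-slope Higgs subsheaf would have slope multiplied by $p$ under each step of the flow, which is incompatible with periodicity) is essentially the paper's, and it is fine. The gap is in the passage from semistable to stable.

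You want to choose a \emph{canonical} destabilizing sub-object --- the socle --- and argue that canonicity forces it to return to itself after one full period $f$, yielding an $f$-periodic sub-flow and hence a sub-$\mathbb{F}_{p^f}$-representation. The problem is that ``canonical'' only means ``invariant under automorphisms of the object''; it does not mean ``preserved by the functors in the flow''. The inverse Cartier $C^{-1}$ is an equivalence and therefore does send socle to socle, but $\Gr$ is not: on a sub-flat-bundle $V'\subset V^{(i)}$ you are forced to use the \emph{induced} Hodge filtration, and there is no reason the associated graded $\Gr(V')\subset (E,\theta)^{(i+1)}$ is again the socle (or even polystable). Your remark that ``$\Gr$ and $C^{-1}$ are inverse equivalences of (filtered) categories at level $\leq p-2$'' is exactly where this slips: the equivalence requires the filtration to be part of the data on both sides, and the induced filtration on $V'$ need not be the one that $C^{-1}$ would have produced. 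So after one trip around the period you land on \emph{some} slope-$0$ sub of $(E,\theta)^{(0)}$, but not necessarily the one you started with, and you do not get a sub-Fontaine--Faltings module.

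The paper closes this gap differently, and the mechanism is worth noting because it explains why the hypothesis is \emph{geometric} absolute irreducibility. One takes an arbitrary slope-$0$ Higgs subbundle $(E',\theta')\subset (E,\theta)^{(0)}_1$ and simply runs the flow on it with induced filtrations. All the resulting subs of $(E,\theta)^{(0)}_1$ (after each multiple of $f$ steps) are degree-$0$ subbundles; since $k$ is a \emph{finite} field, the relevant Quot scheme has only finitely many $k$-points, so the sequence is \emph{preperiodic}. Its periodic part is then an $f'$-periodic sub-flow for some multiple $f'$ of $f$, which corresponds to a subrepresentation defined only over $\mathbb{F}_{p^{f'}}$ and only after restriction to $\pi_1\bigl(X_{K(\zeta_{p^{f'}-1})}\bigr)$. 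This still contradicts irreducibility of $\overline{\rho}$ on $\pi_1(X_{\bar K})$, but would not contradict mere irreducibility over $K$. Your approach, if it worked, would produce a genuine $\mathbb{F}_{p^f}$-subrepresentation and would not need $k$ finite; as it stands, the finiteness of $k$ is doing essential work that your canonicity argument cannot replace.
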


\begin{proof} When $f=1$, this is \cite[Corollary 7.3]{LSZ13a}, but for completeness sake we write a proof. We first prove that $(E,\theta)$ is semistable and has vanishing rational Chern classes. This argument does not assume that $X/W$ is a curve.
 
 In the case that $S$ is empty, then as $(E,\theta)^{(0)}_1$ is periodic, it follows immediately from  \cite[Theorem 6.6]{LSZ13a} that $(E,\theta)^{(0)}_1$ is semistable with vanishing rational Chern classes. The same arguments works in the logarithmic case. Indeed, the Chern classes of all bundles in the flow rationally vanish: the key is that the inverse Cartier is locally a Frobenius pullback, and that the residues of $(V,\nabla)^{(0)}_1$ are nilpotent. In particular, the slope of $(E,\theta)^{(0)}_1$ vanishes. Then, exactly as in \cite[Theorem 6.6]{LSZ13a}, one shows that a slope $\lambda$ Higgs subsheaf of $(E,\theta)^{(0)}_1$ gives rise to a slope $p\lambda$ Higgs subsheaf of $(E,\theta)^{(1)}_1$. In particular, $(E,\theta)^{(0)}_1$ contains no Higgs subsheaf of positive slope; therefore $(E,\theta)^{(0)}_1$ is semistable with vanishing Chern classes.

Next, note that $(E,\theta)^{(0)}_1$ is stable if and only if $(V,\nabla)^{(0)}_1\cong C^{-1}(E,\theta)^{(0)}_1$ is stable. 

We now assume that $X/W$ is a curve. Let us show that $(E,\theta)^{(0)}_1$ is stable (the proof for other terms is precisely analogous). Suppose that $(E,\theta)^{(0)}_1$ is not stable. Then there is a proper Higgs subbundle $(E,\theta)'^{(0)}_1\subset (E,\theta)^{(0)}_1$ of slope $0$; note that $(E,\theta)'^{(0)}_1$ is \emph{automatically semistable}. By running the Higgs-de Rham flow starting with $(E,\theta)'^{(0)}_1$ (using the induced filtration), one obtains a sub  Higgs-de Rham flow
	\begin{equation}\label{eqn:subflow} \tiny
	\xymatrix{  
		& (V,\nabla,\Fil)'^{(0)}_1 \ar[dr] 
		&& (V,\nabla,\Fil)'^{(1)}_1\ar[dr] 
		&& \cdots  \\
		(E,\theta)'^{(0)}_1 \ar[ur]
		&& (E,\theta)'^{(1)}_1 \ar[ur]
		&& (E,\theta)'^{(2)}_1 \ar[ur]\\
	}
	\end{equation}
The initial bundle $E'^{(0)}_1$  has vanishing rational first Chern class. Therefore the same is true every bundle in Equation \ref{eqn:subflow}. There are only finitely many vector subbundles of $E^{(0)}_1$ with vanishing degree because $k$ is a finite field. Therefore Equation \ref{eqn:subflow} in fact initiates a \emph{preperiodic Higgs-de Rham flow}.
The periodic part of Equation \ref{eqn:subflow} forms an $f'$-periodic sub Higgs-de Rham flow of (\ref{HDF_1}). (Note that $f\mid f'$ and $f'$ may be not equal to $f$.) Using the equivalence  of logarithmic periodic Higgs-de Rham flows and logarithmic Fontaine-Faltings modules \cite[Theorem 1.1]{LSYZ14}, together with the equivalence of logarithmic Fontaine-Faltings modules and torsion crystalline representations \cite[Theorem 2.6*, p. 41, i]{Fal89}, one deduces that the periodic part of Equation \ref{eqn:subflow} induces a representation
\[\rho'\colon \pi_1(X_{K(\zeta_{p^{f'}-1})}) \rightarrow \mathrm{GL}_{s}(\mathbb F_{p^{f'}})\]
where $s = \rank E'^{(0)}_1$ and $\zeta_{p^{f'}-1}$ a primitive $(p^{f'}-1)$-th root of unity. The inclusion map between these two periodic Higgs de Rham flows induces an inclusion of representations
\[\rho'\hookrightarrow \rho\mid_{\pi_1(X_{K(\zeta_{p^{f'}-1})})}\]
In particular one gets a proper sub-representation of $\overline{\rho}$. This contradicts the irreducibility of $\overline{\rho}$. 
\end{proof}

\begin{defcon}Notation as in Setup \ref{setup}. Let $m_0,\cdots,m_{f-1}\in \mathbb Z$. Define $\mathcal L_{\lambda}(k_0,k_1,\cdots,k_{f-1})$ to be the following flow over one point $\mathrm{Spec}(W)$ 
\begin{equation} \tiny
\xymatrix{  
	& (W,\Fil_0) \ar[dr] 
	&& (W,\Fil_1) \ar[dr] 
	&& \cdots \ar[dr] &  \\
	W \ar[ur]
	&& W \ar[ur]
	&& W \ar[ur]
	&& W \ar@/^20pt/[llllll]^{\varphi = \lambda}\\
	&&\\
}
\end{equation}
where $\Fil_i^{m_i} W= W$ and $\Fil_i^{m_i+1} W= 0$.
\end{defcon}
The data of $\mathcal L_{\lambda}(m_0,m_1,\cdots,m_{f-1})$ is equivalent to a \emph{rank $1$ filtered $\varphi^f$-module over $W$}, a.k.a. a filtered $\varphi$-module of rank $f$ over $W$ equipped with an endomorphism structure of $\mathbb{Z}_{p^f}$. By the Fontaine-Lafaille correspondence, this corresponds to a crystalline character $\mathrm{Gal}(\overline{K}/K)\rightarrow \mathbb \GL_1(\mathbb Z_{p^f})$.

Therefore, using Lemma Lemma \ref{lemma:lefschetz} and \ref{stability}, to prove Theorem \ref{mainthmRep}, it suffices to prove the following.
\begin{thm}\label{mainthmFM}
	Notation as in Setup \ref{setup}, with $X/W$ a curve, and fix $r\leq p-2$. Then there are only finitely many isomorphism classes of $f$-periodic Higgs-de Rham flow over $(X,S)/W$ of rank $r$ such that the reduction modulo $p$ of terms appeared in the flow are stable, up to twisting by a rank $1$ filtered $\varphi^f$-module over $W$.
\end{thm}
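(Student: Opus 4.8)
The plan is to reduce Theorem \ref{mainthmFM} to the finiteness result of Abe/Lafforgue (Theorem \ref{finiteness}) by passing through the chain of functors assembled in Section \ref{section:preliminaries}. First I would recall that an $f$-periodic Higgs–de Rham flow over $(X,S)/W$ of rank $r\leq p-2$, all of whose mod-$p$ terms are stable, corresponds via \cite[Theorem 1.1]{LSYZ14} to a logarithmic Fontaine–Faltings module $(V,\nabla,\Fil,\Phi,\iota)$ of level $\leq p-2$ with $\mathbb{Z}_{p^f}$-endomorphism structure, i.e.\ an object of $\MF_{[0,p-2],f}^\nabla((X,S)/W)$; twisting by a rank $1$ filtered $\varphi^f$-module $\mathcal L_\lambda(m_0,\dots,m_{f-1})$ on the flow side corresponds to twisting by such a rank $1$ object on the module side. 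Applying $\psi_1$ and then Kedlaya's fully faithful $\psi_2$, each such flow produces an object of $\FIsoc^\dagger(X_k\setminus S_k)_{\mathbb{Q}_{p^f}}$ which is \emph{tame}, since by construction it extends to a logarithmic $F$-isocrystal on $(X_k,S_k)$. So the strategy is: (i) show this assignment from flows to overconvergent $F$-isocrystals has finite fibers up to the relevant twist; (ii) show that stability of the mod-$p$ terms forces the associated overconvergent $F$-isocrystal (base-changed to $\overline{\mathbb Q}_p$) to be irreducible; (iii) invoke Theorem \ref{finiteness} to conclude that there are only finitely many such isocrystals up to twisting by rank $1$ objects, hence only finitely many flows.

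For step (ii), the key point is that an $F$-stable Higgs subsheaf of the underlying rational isocrystal would, upon clearing denominators and using the strong divisibility of $\Phi$ together with the equivalence of periodic flows and Fontaine–Faltings modules, descend to a sub-Fontaine–Faltings module of some iterate, exactly as in the proof of Lemma \ref{stability}: a proper nonzero subobject would yield, after passing to a finite cyclotomic extension $K(\zeta_{p^{f'}-1})$, a proper subrepresentation of $\overline{\rho}$, contradicting the hypothesis that the mod-$p$ terms are stable (equivalently, that $\overline\rho$ is absolutely irreducible). One must be slightly careful to phrase irreducibility over $\overline{\mathbb{Q}}_p$ rather than $\mathbb{Q}_{p^f}$ — here absolute irreducibility of the residual object, propagated up the flow via Lemma \ref{stability}, gives geometric irreducibility of the isocrystal, which is what feeds into Theorem \ref{finiteness}. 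For the determinant/twist bookkeeping one notes that the determinant of an overconvergent $F$-isocrystal coming from a flow has, after a suitable twist by a rank $1$ filtered $\varphi^f$-module, finite order; this is where the extension $L/\mathbb{Q}_p$ in the second sentence of Theorem \ref{finiteness} is used.

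For step (i), finiteness of fibers, the point is full faithfulness: $\psi_2$ is fully faithful by Kedlaya, and the Fontaine–Laffaille–Faltings functor $\mathbb D$ together with \cite[Theorem 1.1]{LSYZ14} gives a fully faithful embedding of periodic Higgs–de Rham flows into crystalline $\mathbb{Z}_{p^f}$-local systems; the only loss of information in passing all the way to $\FIsoc^\dagger(U_k)_{\mathbb{Q}_{p^f}}$ is the Hodge filtration and the integral structure. So two flows with the same associated overconvergent $F$-isocrystal differ only by the choice of a filtration (a discrete datum: finitely many possible Hodge–Tate weight patterns in $[0,p-2]$, so finitely many filtrations on a fixed bundle with a fixed connection — here one uses that the flag variety of filtrations with fixed jumps is of finite type over a finite field at the level of mod-$p$ reductions, and that the integral Fontaine–Faltings structure lifting a fixed mod-$p$ one is rigid) and by a twist by a rank $1$ filtered $\varphi^f$-module. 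Combining with the finiteness of twists in Theorem \ref{finiteness} yields the result.

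I expect the main obstacle to be step (i): carefully showing that the functor from $f$-periodic Higgs–de Rham flows (with stable mod-$p$ reductions, modulo rank $1$ twists) to overconvergent $F$-isocrystals over $U_k$ (modulo rank $1$ twists) has finite fibers. Full faithfulness statements in the literature are for the functors landing in crystalline \emph{local systems}, not directly in $\FIsoc^\dagger$; one must account for (a) the forgotten Hodge filtration, (b) the forgotten integral lattice and the passage $\mathbb{Z}_{p^f}\rightsquigarrow\mathbb{Q}_{p^f}$, and (c) the possibility that a given rational isocrystal admits several non-isomorphic integral/filtered structures. Controlling (a) and (b) should be possible because the Hodge–Tate weights are confined to $[0,p-2]$ and $k$ is finite, so the relevant moduli are discrete and finite; (c) is handled by invoking full faithfulness of the Fontaine–Laffaille functor on the nose after fixing the filtration. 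The twisting bookkeeping — matching "twist by a rank $1$ filtered $\varphi^f$-module over $W$" on the flow side with "twist by a rank $1$ object of $\FIsoc^\dagger(U_k)_{\overline{\mathbb{Q}}_p}$" on the isocrystal side — is the other place where care is needed, since not every rank $1$ overconvergent $F$-isocrystal lifts to a rank $1$ filtered $\varphi^f$-module over $W$; but one only needs the reverse compatibility, together with the finite-order-determinant version of Theorem \ref{finiteness}, to close the argument.
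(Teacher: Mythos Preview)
Your overall architecture --- pass to overconvergent $F$-isocrystals, invoke Theorem~\ref{finiteness}, and control the fibers --- is exactly the paper's. The execution of step~(i), however, has a genuine gap, and the paper closes it with a sharper and different argument than the one you sketch.

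You try to show the map from flows to $F$-isocrystals has \emph{finite} fibers by counting: finitely many Hodge--Tate weight patterns, the flag variety of filtrations is of finite type, and ``the integral Fontaine--Faltings structure lifting a fixed mod-$p$ one is rigid''. This does not work as written. The scheme of Griffiths-transverse filtrations on a fixed $(V,\nabla)$ is of finite type over $W$, not over $k$, so its $W$-points need not be finite; and the rigidity claim is exactly the hard content you are trying to establish --- without further input there can be positive-dimensional families of admissible Hodge filtrations. Likewise, passing from an isomorphism of rational $F$-isocrystals to one of integral de Rham bundles is a real step you do not address.

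The paper instead proves the fibers are \emph{singletons} up to twist (Proposition~\ref{UniqueUptoTwist}), using two ingredients you are missing. First, Lemma~\ref{fromQtoZ}: if two logarithmic flat bundles over $W$ have stable reduction mod $p$ and become isomorphic over $K$, they are already isomorphic over $W$. This handles your item~(b). Second, and crucially, Lemma~\ref{lem:LSZ_Fil}: on a flat bundle, a Griffiths-transverse filtration whose associated graded Higgs bundle is \emph{stable} is unique up to index shift. This kills your item~(a) outright --- no counting is needed. One then propagates the resulting isomorphism through the flow by applying $C^{-1}\circ\Gr$ (Lemma~\ref{uniqueFil}), and stability forces the two periodicity maps $\varphi,\varphi'$ to differ by a unit of $W$, which is precisely the rank-$1$ twist.

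A smaller remark on step~(ii): your appeal to Lemma~\ref{stability} runs in the wrong direction --- that lemma deduces stability of the mod-$p$ terms from absolute irreducibility of $\overline{\rho}$, whereas here stability is the hypothesis and there is no $\overline{\rho}$ in sight. The direct argument is simpler: stability of $(V,\nabla)^{(i)}_1$ implies stability of $(V,\nabla)^{(i)}_{\mathbb Q}$ (stability is open), and a proper sub-$F$-isocrystal with $\mathbb{Q}_{p^f}$-structure would project nontrivially to some $(V,\nabla)^{(i)}_{\mathbb Q}$, giving a proper flat subbundle of the same slope.
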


To prove Theorem \ref{mainthmFM}, we first record the following lemma, which follows immediately from \cite[Theorem 5.4]{Lan14}. 
\begin{lem}\label{fromQtoZ} Notation as in Setup \ref{setup}. Let $(V,\nabla)$ and $(V,\nabla)'$ be logarithmic flat connections over $(X,S)$ that each have stable reduction modulo $p$. If $(V,\nabla)_{\mathbb Q}$ and $(V,\nabla)'_{\mathbb Q}$ are isomorphic on $X_{\mathbb Q}$, then $(V,\nabla)$ and  $(V,\nabla)'$ are isomorphic as logarithmic flat connections on $(X,S)$. Moreover, if $f\colon (V,\nabla)\rightarrow (V,\nabla)'$ is a morphism of logarithmic flat connections that is an isomorphism modulo $p$, then $f$ is an isomorphism.
\end{lem}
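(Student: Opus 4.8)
The plan is to deduce the first assertion from the ``moreover'' clause by clearing denominators, invoking the stability hypothesis (in the form of \cite[Theorem~5.4]{Lan14}) only at the last step. I would prove the ``moreover'' clause first. Suppose $f\colon(V,\nabla)\to(V,\nabla)'$ reduces to an isomorphism modulo $p$. Forgetting the connection, $f\colon V\to V'$ is a morphism of vector bundles on $X$ with $f\otimes_W k$ an isomorphism, so $\rank V=\rank V'$, and $Q:=\operatorname{coker}(f)$ is a coherent sheaf with $Q/pQ=Q\otimes_W k=0$. Since $X\to\Spec W$ is proper, the image of $\operatorname{Supp}Q$ in $\Spec W$ is closed, so if it were nonempty it would contain the closed point and $\operatorname{Supp}Q$ would meet $X_k$; but a point of $X_k$ lying in $\operatorname{Supp}Q$ would force $Q/pQ\neq 0$ there by Nakayama, a contradiction. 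Hence $Q=0$, so $f$ is a surjection of locally free sheaves of equal rank, hence an isomorphism, and it is then automatically an isomorphism of logarithmic flat connections.

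For the first assertion, let $g\colon(V,\nabla)_K\overset{\sim}{\longrightarrow}(V,\nabla)'_K$ be an isomorphism over $X_K$, and consider the $W$-module
\[
M:=\Hom_{\mathcal O_X,\nabla}\big((V,\nabla),(V,\nabla)'\big)
\]
of horizontal $\mathcal O_X$-linear homomorphisms over $X$. It is the kernel of a $W$-linear map between the finitely generated $W$-modules $H^0(X,V^\vee\otimes V')$ and $H^0(X,V^\vee\otimes V'\otimes\Omega_{X/W}^1(\log S))$, hence finitely generated; and it is $p$-torsion free, being a submodule of the $W$-flat module $H^0(X,V^\vee\otimes V')$; so $M$ is free of finite rank over $W$. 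Flat base change along $W\to K$ identifies $M\otimes_W K$ with $\Hom_{\mathcal O_{X_K},\nabla}((V,\nabla)_K,(V,\nabla)'_K)$, which contains $g$. Since $\bigcap_n p^n M=0$, after rescaling $g$ by a power of $p$ I may assume $g\in M\setminus pM$: then $g$ is a horizontal morphism $(V,\nabla)\to(V,\nabla)'$ over $X$ whose reduction $\bar g\colon(V,\nabla)_k\to(V,\nabla)'_k$ is nonzero, while remaining an isomorphism over $X_K$.

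It remains to show that $\bar g$ is an isomorphism, after which the ``moreover'' clause upgrades $g$ to an isomorphism over $X$, finishing the proof. Both $(V,\nabla)_k$ and $(V,\nabla)'_k$ are stable by hypothesis, and they have equal rank and degree (these are constant in the flat proper family over the connected base $\Spec W$, and agree over $X_K$ since $g$ is a generic isomorphism), hence equal slope $\mu$. If $\ker\bar g\neq 0$, it is a proper nonzero horizontal subsheaf of the stable $(V,\nabla)_k$, so $\mu(\ker\bar g)<\mu$; then $\operatorname{im}\bar g\cong(V,\nabla)_k/\ker\bar g$ has rank less than $\rank V'$ and slope $>\mu$, contradicting that it is a proper subsheaf of the stable $(V,\nabla)'_k$ of slope $\mu$. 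Hence $\bar g$ is injective, $\operatorname{im}\bar g\cong(V,\nabla)_k$ is a full-rank horizontal subsheaf of $(V,\nabla)'_k$ of the same degree, and therefore $\operatorname{im}\bar g=(V,\nabla)'_k$, so $\bar g$ is an isomorphism. (In the higher-dimensional case of Setup~\ref{setup} one runs the same argument with reduced Hilbert polynomials in place of slopes; this last step is the content of \cite[Theorem~5.4]{Lan14}.) The only non-formal ingredient is this stability statement; the rest is clearing denominators plus an elementary support/Nakayama argument — which is why the lemma follows ``immediately''.
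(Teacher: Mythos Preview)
Your proof is correct and is precisely the unpacking the paper has in mind: the paper gives no argument beyond ``follows immediately from \cite[Theorem 5.4]{Lan14}'', and you have supplied exactly that --- the denominator-clearing/Nakayama steps are formal, and the one substantive input (a nonzero horizontal map between stable objects of equal slope is an isomorphism) is the content of Langer's result you cite.
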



We recall the following lemma, which is due independently to Lan-Sheng-Zuo \cite[Lemma 7.1]{LSZ13a} and Langer \cite[Corollary 5.6]{Lan14}.
\begin{lem}\label{lem:LSZ_Fil} Let $(Y,D)$ be a smooth projective variety together with a simply normal crossings divisor over an algebraically closed field $k$ and let $(V,\nabla)$ be a logarithmic flat bundle over $(Y,D)$. If there exists a Griffiths transverse filtration $\Fil$ on $(V,\nabla)$ such that the associated graded logarithmic Higgs module $(E,\theta)$ is Higgs stable, then $\Fil$ is unique up to a shift of index.
\end{lem}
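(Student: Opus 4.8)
The plan is to reduce the uniqueness statement to the stability of the graded Higgs bundle by a standard argument with the grading associated to two filtrations. First I would suppose $\Fil$ and $\Fil'$ are two Griffiths transverse filtrations on $(V,\nabla)$, both with Higgs-stable associated graded. After a shift of index on $\Fil'$, I may assume that the top nonzero piece $\Fil^{a}V$ and the top nonzero piece $\Fil'^{a'}V$ have indices normalized so that $\Fil^{a}V \cap \Fil'^{a'}V$ is nonzero; concretely, translate so that the largest index $b$ with $\Fil^{b}V \cap \Fil'^{b_{0}}V \neq 0$ (where $b_{0}$ is the top index of $\Fil'$) is as large as possible, and run the comparison from the top down. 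The key point is that $\Fil^{b}V$ is a Higgs subsheaf of $(V,\nabla)$ in the sense that $\nabla(\Fil^{b}V)\subseteq \Fil^{b-1}V\otimes\Omega^{1}(\log D)$, so its image in the graded piece $E_{b} = \Fil^{b}V/\Fil^{b+1}V$ is a $\theta$-invariant subsheaf; similarly for $\Fil'$.

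The main step is a slope comparison. Because $(E,\theta)$ is Higgs stable and a Higgs bundle admitting a flat connection via inverse Cartier (or more directly, because the hypotheses force vanishing rational Chern classes in the situations we apply this) one knows $(E,\theta)$ is Higgs semistable of slope $0$; the analogous statement holds for $(E',\theta')$. I would then consider the natural map $\Fil^{b}V \hookrightarrow V \twoheadrightarrow V/\Fil'^{b_{0}+1}V$ — or, dually, project onto the graded pieces — and observe that it is compatible with the Higgs fields up to the appropriate shift, hence its image and kernel are semistable Higgs subsheaves of slope $0$ in stable Higgs bundles of slope $0$. Stability then forces the map to be either zero or an isomorphism onto a direct summand. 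Iterating this down the filtration (an induction on the number of nonzero graded pieces, using that $\rank$ drops) shows that $\Fil$ and $\Fil'$ have the same jumps and the same flag, i.e.\ $\Fil = \Fil'$ after the initial shift.

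The hard part will be bookkeeping the index shifts so that the Higgs-field compatibility is exact rather than off by one, and handling the case where the two filtrations have different lengths: one must rule out, via semistability and the slope-$0$ normalization, that a piece of $\Fil$ straddles two pieces of $\Fil'$ in a way that produces a subsheaf of negative slope in a stable object. Once the slope-$0$ semistability of all the relevant graded objects is in hand, this is forced, but making the induction airtight — especially tracking which graded piece each $\Fil^{b}V$ maps isomorphically onto — is where the real work lies. I would cite \cite[Lemma 7.1]{LSZ13a} and \cite[Corollary 5.6]{Lan14} for the details, since the argument is identical to the non-logarithmic case with $\Omega^{1}(\log D)$ in place of $\Omega^{1}$.
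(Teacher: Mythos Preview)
The paper does not give a proof of this lemma: it is simply recalled from \cite[Lemma~7.1]{LSZ13a} and \cite[Corollary~5.6]{Lan14}, so there is no argument in the paper to compare against. Since you ultimately defer to the same two references, you agree with the paper at that level.

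The sketch you offer before deferring, however, does not hold together. Your central object is the map $\Fil^{b}V \hookrightarrow V \twoheadrightarrow V/\Fil'^{b_{0}+1}V$, which you assert is ``compatible with the Higgs fields up to the appropriate shift.'' But neither side is naturally a Higgs sheaf: Griffiths transversality says $\nabla$ drops each filtration by one, so $\Fil^{b}V$ is not $\nabla$-invariant, and unless $b$ and $b_{0}$ sit at the extreme indices these are not graded pieces of $(E,\theta)$ or $(E',\theta')$ either. The argument in the cited sources instead runs through the diagonally indexed subsheaves
\[
F_c \;:=\; \bigoplus_i \operatorname{image}\!\bigl(\Fil^{i}V \cap \Fil'^{\,i+c}V \longrightarrow \Gr^{i}_{\Fil}V\bigr)\ \subset\ E,
\]
which are genuinely $\theta$-invariant precisely because \emph{both} filtrations satisfy Griffiths transversality; one then plays the stability of $(E,\theta)$ against the slope equality $\mu(E)=\mu(V)=\mu(E')$ to force each $F_c$ to be $0$ or $E$, which is exactly the statement that $\Fil'$ is a shift of $\Fil$. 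Two further slips: you import a slope-$0$ / vanishing-Chern-class hypothesis that is neither in the lemma nor needed for the argument, and you assume $\Fil'$ also has Higgs-stable associated graded, which the lemma does not require.
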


\begin{lem}\label{uniqueFil}Notation as in Setup \ref{setup}. Let $\{(V,\nabla,\Fil,\Phi,\iota)\}$ and $\{(V,\nabla,\Fil,\Phi,\iota)'\}$ be two logarithmic Fontaine-Faltings modules over $(X,S)/W$ with Hodge-Tate weights in $[0,p-2]$ and endomorphism structure of $\mathbb{Z}_{p^f}$. Let 
		
\begin{equation}\label{HDF} \tiny
		\xymatrix{  
			& (V,\nabla,\Fil)^{(0)} \ar[dr] 
			&& (V,\nabla,\Fil)^{(1)} \ar[dr] 
			&& \cdots \ar[dr] &  \\
			(E,\theta)^{(0)} \ar[ur]
			&& (E,\theta)^{(1)} \ar[ur]
			&& (E,\theta)^{(2)} \ar[ur]
			&& (E,\theta)^{(f)} \ar@/^20pt/[llllll]^{\varphi}\\
			&&\\
		}
\end{equation}	
	
be the $f$-periodic Higgs-de Rham flow associated to $\{(V,\nabla,\Fil,\Phi,\iota)\}$ and 
\begin{equation}\label{HDF'} \tiny
		\xymatrix{  
			& (V,\nabla,\Fil)'^{(0)} \ar[dr] 
			&& (V,\nabla,\Fil)'^{(1)} \ar[dr] 
			&& \cdots\ar[dr] &  \\
			(E,\theta)'^{(0)} \ar[ur]
			&& (E,\theta)'^{(1)} \ar[ur]
			&& (E,\theta)'^{(2)} \ar[ur]
			&& (E,\theta)'^{(f)} \ar@/^20pt/[llllll]^{\varphi'}\\
			&&\\
		}
\end{equation}		
be the $f$-periodic Higgs-de Rham flow associated to $\{(V,\nabla,\Fil,\Phi,\iota)'\}$.
Assume the reduction modulo $p$ of all terms in the two flows are stable and assume there exists a morphism of de Rham bundle
\[g^{(0)}\colon (V,\nabla)^{(0)}\rightarrow (V,\nabla)'^{(0)}\] 
whose reduction modulo $p$ is nontrivial. Then $g^{(0)}$ is an isomorphism and there exists unique isomorphisms, up to scale, of de Rham bundles 
	\[g^{(i)} \colon (V,\nabla)^{(i)} \rightarrow (V',\nabla')^{(i)}\quad \text{for all } i = 0,1,\cdots,f-1.\]
Moreover, under these isomorphisms the Hodge filtration $\Fil^{(i)}$ coincides with $\Fil'^{(i)}$ after a shift of index and there exists $\lambda\in W^\times$ such that
\[\varphi =\lambda\varphi'.\]
\end{lem}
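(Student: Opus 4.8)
The plan is to bootstrap from the single morphism $g^{(0)}$ up the entire Higgs--de Rham flow by repeatedly applying the inverse Cartier transform and grading functors, using stability at each stage to control the morphism spaces. First I would show $g^{(0)}$ is an isomorphism: its reduction $\bar g^{(0)}\colon (V,\nabla)^{(0)}_1\to (V,\nabla)'^{(0)}_1$ is a nonzero morphism between stable objects of the same slope (both flows have vanishing rational Chern classes by Lemma~\ref{stability}, or rather by the semistability-with-vanishing-Chern-classes argument in its proof), hence an isomorphism; then Lemma~\ref{fromQtoZ} (applied in its ``morphism that is an isomorphism mod $p$'' form) upgrades $g^{(0)}$ to an isomorphism of integral logarithmic flat connections. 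By Lemma~\ref{lem:LSZ_Fil}, the Griffiths-transverse filtration on each de Rham bundle in the flow is unique up to a shift of index (its graded is Higgs stable), so $g^{(0)}$ carries $\Fil^{(0)}$ to $\Fil'^{(0)}$ after a shift; applying $\Gr$ gives an isomorphism of Higgs bundles $(E,\theta)^{(1)}\cong (E,\theta)'^{(1)}$, and applying the inverse Cartier transform $C^{-1}$ gives $g^{(1)}\colon (V,\nabla)^{(1)}\xrightarrow{\sim}(V,\nabla)'^{(1)}$. Iterating this for $i=0,1,\dots,f-1$ produces all the $g^{(i)}$; uniqueness up to scale at each step follows from $\Hom$ between stable bundles of the same slope being at most one-dimensional, and the shift of index is forced to be the same at each stage by periodicity (or can be normalized).

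The remaining point is to compare the Frobenius structures. By $f$-periodicity, $\varphi$ is an isomorphism $(V,\nabla)^{(f)}\xrightarrow{\sim}(V,\nabla)^{(0)}$ obtained from $C^{-1}\Gr$ applied $f$ times, and similarly for $\varphi'$. I would chase the commutative ladder: the composite $g^{(0)}\circ\varphi$ and $\varphi'\circ g^{(f)}$ (where $g^{(f)}=g^{(0)}$ by periodicity, matching the flow's identification of the $(f)$-term with the $(0)$-term) are two isomorphisms $(V,\nabla)^{(f)}\to (V,\nabla)'^{(0)}$ between the same stable bundles, hence differ by a scalar $\lambda\in K^\times$. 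Since both $\varphi,\varphi'$ and all the $g^{(i)}$ are integral isomorphisms (again by the integral refinement of Lemma~\ref{fromQtoZ}, as each $g^{(i)}$ is an isomorphism mod $p$ of stable objects), the scalar $\lambda$ actually lies in $W^\times$, giving $\varphi=\lambda\varphi'$.

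The main obstacle I anticipate is bookkeeping the shift-of-index ambiguity coherently across the flow: the filtration is only unique up to a shift, so $g^{(i)}$ might match $\Fil^{(i)}$ with a shifted $\Fil'^{(i)}$, and one must check these shifts are compatible with the gluing maps of the periodic flow (and with the endomorphism structure $\iota$ of $\mathbb{Z}_{p^f}$) so that they can be simultaneously normalized away without destroying the identity $\varphi=\lambda\varphi'$. A secondary subtlety is ensuring that running the whole argument mod $p$ first (where stability is literally hypothesized) and then lifting via Lemma~\ref{fromQtoZ} is legitimate at every stage---in particular that the mod-$p$ reduction of each newly constructed $g^{(i)}$ is indeed nonzero, which follows because $C^{-1}$ and $\Gr$ are functorial and the mod-$p$ reduction of $g^{(0)}$ is nonzero by hypothesis.
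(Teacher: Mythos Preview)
Your approach is essentially the paper's: show $g^{(0)}$ is an isomorphism via stability mod $p$ plus Lemma~\ref{fromQtoZ}, match filtrations via Lemma~\ref{lem:LSZ_Fil}, propagate by $C^{-1}\circ\Gr$, and compare periodicity maps via stability. There is, however, one real gap.

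Lemma~\ref{lem:LSZ_Fil} is stated (and proved) only over an algebraically closed field, so it tells you at most that $g^{(0)}(\Fil^{(0)}) \equiv \Fil'^{(0)} \pmod p$ after a shift. You need the integral statement $g^{(0)}(\Fil^{(0)}) = \Fil'^{(0)}$ over $W$ before you can form $\Gr(g^{(0)})$ as a map of Higgs bundles over $W$ and hence define $g^{(1)} = C^{-1}\circ\Gr(g^{(0)})$ integrally. Your ``secondary subtlety'' paragraph worries about nonvanishing of the reductions of the $g^{(i)}$, but not about this lifting of filtrations, and Lemma~\ref{fromQtoZ} does not help here: it lifts isomorphisms of flat connections, not equalities of sub\-bundles. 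The paper handles exactly this point by invoking the argument of \cite[Proposition~7.5]{LSZ13a} to upgrade the mod-$p$ equality of filtrations to an equality over $W$; you should do the same, or supply that inductive lifting argument explicitly.

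A minor point: the parenthetical ``$g^{(f)}=g^{(0)}$ by periodicity'' is not correct as stated. The map $g^{(f)}$ is built by iterating $C^{-1}\circ\Gr$ from $g^{(0)}$, while the periodicity identifications $\varphi,\varphi'$ are separate pieces of data; there is no reason the square commutes on the nose. What is true---and what you correctly say in the very next clause---is that $g^{(0)}\circ\varphi$ and $\varphi'\circ g^{(f)}$ are two isomorphisms between the same stable objects and hence differ by a unit $\lambda\in W^\times$. Just drop the parenthetical.
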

\begin{proof} After shifting the filtrations, we may assume 
	\[\Fil^0V^{(0)} = V^{(0)}, \quad  \Fil^1V^{(0)} \neq V^{(0)}, \quad  \Fil'^0V'^{(0)} = V'^{(0)}, \text{ and } \Fil'^1V'^{(0)} \neq V'^{(0)}.\]	
By Lemma \ref{fromQtoZ}, it follows that $g^{(0)}$ is an isomorphism and is unique up to scale. Lemma \ref{lem:LSZ_Fil} implies that 
\[g^{(0)}(\Fil\mid_{V^{(0)}})\pmod{p} = \Fil'\mid_{V'^{(0)}}\pmod{p}.\]
By following the proof of \cite[Proposition 7.5]{LSZ13a} verbatim, one deduces that $g^{(0)}(\Fil\mid_{V^{(0)}}) = \Fil'\mid_{V'^{(0)}}$. Then denote 
\[g^{(1)}:= C^{-1}\circ \Gr(g^{(0)}).\]
Since $g^{(0)}$ is an isomorphism, $g^{(1)}\colon (V,\nabla)^{(1)} \rightarrow (V,\nabla)'^{(1)}$ is also an isomorphism between de Rham bundles. By the stablility of the modulo $p$ reductions, this isomorphism is unique up to a scale. Inductively on the index $i$, one constructs isomorphisms $g^{(2)},g^{(3)},\cdots$. 

That the periodicity maps $\varphi$ and $\varphi'$ differ by an invertible element in $W$ follows from the stability.
\end{proof}

\begin{prop}\label{UniqueUptoTwist}
Notation as in Lemma~\ref{uniqueFil}. Assume the reduction modulo $p$ of all terms are stable. If the $F$-isocrystals associated to $(V,\nabla,\Fil,\Phi,\iota)$ and $(V,\nabla,\Fil,\Phi,\iota)'$ are isomorphic, then there exists an integer $t$ in $\{0,1,\cdots,f-1\}$ and a filtered $\varphi^f$-module $\mathcal L$ over $W$ of rank 1 such that
\[(V,\nabla,\Fil,\Phi,\iota) \cong (V',\nabla',\Fil',\Phi',\sigma^t(\iota'))\otimes \mathcal L\]
\end{prop}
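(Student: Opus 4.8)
The hypothesis gives an isomorphism of $F$-isocrystals $\psi_1(V,\nabla,\Fil,\Phi,\iota) \cong \psi_1(V',\nabla',\Fil',\Phi',\iota')$ over $\mathcal U_K$, but we must be careful: both sides carry an action of $W(\mathbb F_{p^f}) = \mathbb Z_{p^f}$ via $\iota$ and $\iota'$, and the given isomorphism need not respect these. Writing $\sigma$ for the Frobenius on $\mathbb Z_{p^f}$, the underlying $\mathbb Q_p$-isocrystal of $(V,\nabla,\Phi,\iota)$ decomposes, after base change, into $f$ pieces indexed by embeddings $\mathbb Q_{p^f}\hookrightarrow \overline{\mathbb Q}_p$, and an abstract isomorphism of $F$-isocrystals will match the piece for one embedding of the first module with the piece for some embedding of the second; concretely this means that for a suitable $t\in\{0,\dots,f-1\}$ the isomorphism is $\iota$-semilinear of type $\sigma^t$, i.e.\ it intertwines $\iota$ with $\sigma^t\circ\iota'$. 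This is why the statement involves $\sigma^t(\iota')$; the first step of the proof is to extract this $t$ and reduce to the case where the isomorphism of $F$-isocrystals is $\mathbb Z_{p^f}$-linear for the pair $(\iota,\sigma^t(\iota'))$. Replacing $(V',\nabla',\Fil',\Phi',\iota')$ by $(V',\nabla',\Fil',\Phi',\sigma^t(\iota'))$ — which changes the Higgs–de Rham flow only by a cyclic relabeling of its $f$ legs — we may assume $t=0$.

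\textbf{From isocrystals back to integral de Rham bundles.} Now I have an isomorphism of log-$F$-isocrystals; by Kedlaya's full faithfulness ($\psi_2$) it comes from an isomorphism in $\FIsoc^{\log}_{\nilp}(X_k)_{\mathbb Q_{p^f}}$, hence after clearing denominators a nonzero morphism of the underlying $W$-lattices $g^{(0)}\colon (V,\nabla)^{(0)}\to (V',\nabla')^{(0)}$ of de Rham bundles whose reduction mod $p$ is nonzero (scale so that it does not vanish identically modulo $p$). This is exactly the input to Lemma~\ref{uniqueFil}: that lemma then tells me $g^{(0)}$ is an isomorphism of de Rham bundles, unique up to a scalar in $W^\times$; it propagates through the flow to isomorphisms $g^{(i)}\colon (V,\nabla)^{(i)}\to (V',\nabla')^{(i)}$ for all $i$; after a single shift of the filtration index the $g^{(i)}$ carry $\Fil^{(i)}$ to $\Fil'^{(i)}$; and the periodicity maps satisfy $\varphi = \lambda\,\varphi'$ for some $\lambda\in W^\times$.

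\textbf{Assembling the twist.} The remaining point is bookkeeping: the collection $(g^{(i)})$ is an isomorphism of the two Higgs–de Rham flows up to (i) an overall shift of all filtration indices by some integer, say $m$, and (ii) a discrepancy of $\lambda\in W^\times$ between the two Frobenius/periodicity structures. A rank-$1$ filtered $\varphi^f$-module over $W$ in the sense of the Definition--Construction above is precisely the data of such a shift together with such a unit: take $\mathcal L := \mathcal L_{\lambda}(m,m,\dots,m)$ (or, more generally, whatever shifts are needed leg-by-leg — but stability forces the shifts on all legs to be determined by one integer since Lemma~\ref{lem:LSZ_Fil} pins down each $\Fil^{(i)}$ up to a single shift, and Griffiths transversality through the flow links these shifts). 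Tensoring the flow of $(V',\nabla',\Fil',\sigma^t(\iota'))$ with $\mathcal L$ adjusts exactly the filtration shift and the periodicity scalar, so that $(g^{(i)})$ becomes an isomorphism of logarithmic Fontaine--Faltings modules $(V,\nabla,\Fil,\Phi,\iota)\cong (V',\nabla',\Fil',\Phi',\sigma^t(\iota'))\otimes\mathcal L$, using the equivalence between periodic Higgs--de Rham flows and Fontaine--Faltings modules \cite[Theorem 1.1]{LSYZ14}.

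\textbf{Main obstacle.} The only genuinely delicate step is the first one — correctly identifying the integer $t$ and checking that an abstract isomorphism of the underlying $F$-isocrystals is necessarily $\sigma^t$-semilinear with respect to the $\mathbb Z_{p^f}$-structures, rather than mixing several embeddings. This is where the endomorphism structure $\iota$ genuinely enters, and it rests on the fact that each $(V,\nabla)^{(i)}_1$ is \emph{stable} (Lemma~\ref{stability}), so the isotypic decomposition of the $\mathbb Z_{p^f}$-action on the isocrystal is as rigid as possible and the only isomorphisms available are the semilinear ones of a single type. Everything after that is a direct application of Lemma~\ref{uniqueFil} plus the translation dictionary between flows, Fontaine--Faltings modules, and isocrystals already recorded in Section~\ref{section:preliminaries}.
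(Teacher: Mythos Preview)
Your proposal is correct and follows essentially the same route as the paper: extract the integer $t$ from the isomorphism of $F$-isocrystals, produce an integral map $g^{(0)}\colon (V,\nabla)^{(0)}\to (V',\nabla')^{(0)}$ with nonzero reduction modulo $p$, feed it into Lemma~\ref{uniqueFil}, and then package the resulting filtration shift and scalar $\lambda\in W^\times$ as a rank~$1$ filtered $\varphi^f$-module.

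One point of comparison worth recording: the extraction of $t$ in the paper is more elementary than the step you flag as ``genuinely delicate.'' Rather than arguing that the given isomorphism of $F$-isocrystals is globally $\sigma^t$-semilinear for the $\iota$-structures, the paper simply considers the composite
\[
(V,\nabla)^{(0)}_{\mathbb Q}\ \hookrightarrow\ (V,\nabla)_{\mathbb Q}\ \cong\ (V',\nabla')_{\mathbb Q}\ \twoheadrightarrow\ (V',\nabla')^{(i)}_{\mathbb Q}
\]
and takes any $i$ for which this is nonzero; relabeling so that $i=0$ and clearing a power of $p$ gives $g^{(0)}$ directly. One never needs to know that the original isomorphism carries each $V^{(j)}$ into a single $V'^{(j+t)}$, because the $g^{(i)}$ for $i>0$ are \emph{rebuilt} from $g^{(0)}$ via $C^{-1}\circ\Gr$ inside Lemma~\ref{uniqueFil}, not read off from the original map. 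So the step you identify as the main obstacle dissolves once you stop asking for more than a single nonzero component. Your invocation of Kedlaya's full faithfulness~$\psi_2$ is harmless but likewise unnecessary here, since the paper works directly with the log $F$-isocrystal $(V,\nabla,\Phi)_{\mathbb Q}$ rather than passing through $\FIsoc^\dagger(U_k)$.
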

\begin{proof} First of all, the statement that the underlying $F$-isocrystals are isomorphic is the statement that $(V,\nabla,\Phi)_{\mathbb{Q}}\cong (V',\nabla',\Phi')_{\mathbb Q}$ in $\FIsoc(U_k)$. After shifting the filtration, we may assume 
\[\Fil^0V = V, \quad  \Fil^1V \neq V, \quad  \Fil'^0V' = V', \text{ and } \Fil'^1V' \neq V'.\]
Consider the composition maps
\[(V,\nabla)^{(0)}_{\mathbb Q} \hookrightarrow (V,\nabla)_{\mathbb Q} \cong (V,\nabla)'_{\mathbb Q} \twoheadrightarrow  (V,\nabla)'^{(i)}_{\mathbb Q}.\]
There is at least one index $i$ such that the composition map is not equal to zero. By shifting the index (equivalent to changing the endomorphism structure by a conjugation $\sigma^t$), we may assume $i=0$ and denote the composition map by 
\[g^{(0)}\colon (V,\nabla)^{(0)}_{\mathbb Q}\rightarrow (V,\nabla)'^{(0)}_{\mathbb Q}\]
 Multiplying by a suitable power of $p$, we may assume \[g^{(0)}(V^{(0)}) \subset V'^{(0)} \quad \text{and } g^{(0)}(V^{(0)}) \not\subset pV'^{(0)}.\]
 Then $g^{(0)}\pmod{p}$ is a non-trivial morphism between two stable de Rham bundles with the same slope. Thus $g^{(0)}\pmod{p}$ is an isomorphism. By Lemma~\ref{uniqueFil}, $g^{(0)}$ is an isomorphism and there exist isomorphisms
 \[g^{(i)}\colon (V,\nabla)^{(i)} \rightarrow (V,\nabla)'^{(i)}\] 
such that, up to a shift of filtration one has 
\[g^{(i)}(\Fil\mid_{V^{(i)}}) = \Fil'\mid_{V'^{(i)}}.\]

Now, we identify $(V,\nabla,\Fil)^{(i)}$ and $(V',\nabla',\Fil')^{(i)}$ via $g^{(i)}$. Both $(V,\nabla)^{(0)}$ and $(V',\nabla')^{(0)}$ are crystals on the logarthmic crystalline site of $(X_k,S_k)/W$. We may then consider both $\Phi^f$ and $\Phi'^f$ as morphisms
\[(\text{Frob}^*)^f(V,\nabla)^{(0)}\rightarrow (V,\nabla)^{(0)}\]
in the category of logarithmic crystals on $(X_k,S_k)/W$. As stability is an open condition, the logarithmic flat connection $(V,\nabla)^{(0)}_{\mathbb Q}$ is stable. As $\Phi'_{\mathbb Q}$ and $\Phi_{\mathbb{Q}}$ are isomorphisms, it follows that there exists $\lambda\in W^\times$ such that $\Phi = \lambda\Phi'$. Thus there exists an integer $t$ in $\{0,1,\cdots,f-1\}$ and a filtered $\varphi^f$-module $\mathcal L$ over $W$ of rank 1 such that
\[(V,\nabla,\Fil,\Phi,\iota) \cong (V',\nabla',\Fil',\Phi',\sigma^t(\iota'))\otimes \mathcal L.\]
\end{proof}

\begin{proof}[Proof of Theorem~\ref{mainthmFM}] 
Let $HDF$ and $HDF'$ be $f$-periodic Higgs-de Rham flows on $(X,S)$, whose terms modulo $p$ are all stable. Suppose the associated $F$-isocrystals are isomorphic. (By  \cite[Proposition 6.3.2]{kedlayasemistableI}, it does not matter if we consider these as objects of $\FIsoc_{\nilp}^{\log}(X_k)$ or $\FIsoc^{\dagger}(U_k)$.)  Then Proposition~\ref{UniqueUptoTwist} implies that $HDF$ and $HDF'$ differ by a twist. On the other hand, there are only finitely many isomorphism classes of overconvergent $F$-isocrystals on $U_k$, up to twist, by Theorem~\ref{finiteness}. The result follows.
%
\end{proof}

\section{Theorem \ref{mainthmRep} is false over $k=\bar{\mathbb{F}}_p$}\label{section:nonexample}\label{section:counterexample}
Let $\lambda\in W$ with $\lambda\not\equiv 0,1\pmod{p}$. Let $(X,S)=(\mathbb P^1,\{0,1,\infty,\lambda\})$. Let $\mathcal M_{\lambda}$ denote the moduli space of semi-stable graded logarithmic Higgs bundles  $(E,\theta)$  of rank $2$ and degree $1$ over $(X,S)$. A Higgs bundle in this moduli space may be written as
\[(E,\theta)=(\mathcal O\oplus\mathcal O(-1),\, \theta\colon \mathcal O\xrightarrow{\theta}\mathcal{O}(-1)\otimes \Omega^1_X(\log S))\quad (*).\]
We attach a parabolic structure at one of the four points of $D$ with parabolic weight
$({1\over 2},\, {1\over 2})$, so that $(E,\theta)$ is parabolically stable and of parabolic degree zero. Let $\mathcal M^{par}_\lambda$ denote the moduli space of those type logarithmic Higgs bundles with a parabolic structure at
one of those four cusps.
One defines an isomorphism
\[\mathcal M_{\lambda}\simeq \mathbb P^1_{W(k)}\]
by sending $(E,\theta)$ to the zero locus 
$(\theta)_0\in \mathbb P^1$. 
Consider the self map $\fai=\mathrm{Gr}\circ \mathcal C_{1,2}^{-1}$ on $\mathcal M_{\lambda}\otimes_Wk\simeq \mathbb P^1_{k}$ induced by Higgs-de Rham flow ($\otimes\mathcal O_{\mathbb P^1}((1-p)/2)$). Since $C^{-1}_{1,2}$ is a factor of the composition, $\fai$ factors through the Frobenius map, i.e., there exists a rational function $\psi_{\lambda}\in k(z)$ such that $\fai(z) = \psi_\lambda(z^p)$. In this note we call $\psi_\lambda$ the \emph{Verschiebung part} of the self map $\fai$. The periodic points of the self map $\fai$ are naturally corresponding to the twisted periodic Higgs-de Rham flows on $(X,S)_1$; one forgets the parabolic structure and simply runs a twisted Higgs-de Rham flow \cite[Section 4]{SYZ17}.

Conjecturally this self map is related to the muliplication by $p$ map on the elliptic curve
\[ C_\lambda\colon y^2 = x(x-1)(x-\lambda).\]
\begin{conj}[Conjecture 5.8 in \cite{SYZ17}]\label{ConjSYZ} The following diagram commutes
\begin{equation}
\xymatrix{C_\lambda \ar[r]^{[p]} \ar[d]^{\pi}& C_\lambda\ar[d]^{\pi}\\
\mathbb P^1 \ar[r]^{\fai}  &  \mathbb P^1 \\}
\end{equation}
\end{conj}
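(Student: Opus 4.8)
\emph{Step 1: reduce to an identity of self-maps.} Since $p$ is odd, $[p]$ commutes with the hyperelliptic involution $\iota$ of $C_\lambda$ (because $[p](-P)=-[p](P)$), and $\pi$ is exactly the quotient $C_\lambda\to C_\lambda/\iota\cong\mathbb P^1$ by $\iota$, given by the $x$-coordinate. Hence $[p]$ descends to a unique rational map $L_p\colon\mathbb P^1\to\mathbb P^1$ with $\pi\circ[p]=L_p\circ\pi$ --- the Latt\`{e}s map of $[p]$ --- and, $\pi$ being dominant, Conjecture~\ref{ConjSYZ} is \emph{equivalent} to the identity $\fai=L_p$. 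Writing $[p]=V\circ F$ with $F\colon C_\lambda\to C_\lambda^{(p)}$ the relative Frobenius and $V\colon C_\lambda^{(p)}\to C_\lambda$ the Verschiebung isogeny, $L_p$ factors as $\bar V\circ(z\mapsto z^p)$ with $z\mapsto z^p$ descending $F$ and $\deg\bar V=p$; so the target splits into two matching assertions: that the inseparable factor $z\mapsto z^p$ of $\fai$ (which the note already attributes to $C^{-1}_{1,2}$ being Frobenius-pullback-flavored) descends $F$, and that the ``Verschiebung part'' $\psi_\lambda$, coming from $\Gr$, equals $\bar V$. In particular one must check $\deg\psi_\lambda=p$, which I expect to drop out of the degree of $\Gr$ along the Hitchin section.

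\emph{Step 2: a modular dictionary.} I would first pin down an isomorphism $\mathcal M_\lambda\cong\mathrm{Pic}^1(C_\lambda)/\iota\cong C_\lambda/\iota\cong\mathbb P^1$, so that $\pi$ itself realizes the quotient $C_\lambda\to\mathcal M_\lambda$. The natural candidate sends a degree-$1$ line bundle $L$ on $C_\lambda$ to the pushforward $\pi_*L$, a rank-$2$ bundle of degree $-1$ of the form in $(*)$, together with the nilpotent logarithmic Higgs field $\theta_L$ carried by such a pushforward along the hyperelliptic cover; a direct computation --- both sides are $\mathbb P^1$ --- should identify $(\theta_L)_0\in\mathbb P^1$ with the image $x(\operatorname{Supp}L)$. (Equivalently one may use the classical orbifold correspondence between parabolic rank-$2$ bundles on $(\mathbb P^1,S)$ with half-integral weights and $\iota$-equivariant line bundles on $C_\lambda$.) The parabolic weight $(\tfrac12,\tfrac12)$ at a cusp and the normalizing twist by $\mathcal O_{\mathbb P^1}(\tfrac{1-p}{2})$ in the definition of $\fai$ should translate on $C_\lambda$ into tensoring $L$ by a fixed line bundle attached to that Weierstrass point; keeping track of this is what prevents a spurious index shift between $\fai$ and $L_p$.

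\emph{Step 3: linearize the flow.} The heart is to show that under this dictionary one step $(E,\theta)\mapsto\Gr\,C^{-1}_{1,2}(E,\theta)$ of the twisted Higgs--de Rham flow becomes $[p]$ on $\mathrm{Pic}^1(C_\lambda)/\iota$. Since $(E,\theta)=(\pi_*L,\theta_L)$ lies in the Hitchin section, I would show that $C^{-1}_{1,2}$ commutes with logarithmic pushforward along $\pi$ (the ramification maps into $S$, where the log structure absorbs it), so that $C^{-1}_{1,2}(\pi_*L,\theta_L)\cong\pi_*(L',\nabla_{L'})$, where $(L',\nabla_{L'})$ is the inverse Cartier transform on $C_\lambda$ of the rank-$1$ Higgs bundle underlying $L$; thus $L'=F_{C_\lambda}^{*}$ of a line bundle, which accounts for the $z\mapsto z^p$ factor. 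Then, by Lemma~\ref{lem:LSZ_Fil} the Hodge filtration of the oper $C^{-1}_{1,2}(E,\theta)$ is the unique Griffiths-transverse one with stable graded, and on $C_\lambda$ it is the canonical filtration attached to $F_{C_\lambda}^{*}$; I would check that passing to $\Gr$ (together with the twist of Step 2) implements, via the Cartier operator, exactly the descent of $F_{C_\lambda}^{*}$ to the Verschiebung isogeny $V$. Composing, one step of the flow realizes $V\circ F=[p]$; descending along $\pi$ gives $\fai=L_p$.

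\emph{Main obstacle.} The decisive and hardest point is Step 3: proving rigorously that $C^{-1}_{1,2}$ --- which is defined via a Hodge filtration plus Frobenius-descent gluing and is only \emph{locally} a Frobenius pullback --- commutes with pushforward along the \emph{ramified} cover $\pi$ across the branch locus, and that $\Gr$ of the canonical Hodge filtration reproduces the Verschiebung (equivalently, the Cartier operator), with the half-integral twist and the parabolic cusp bookkept so that no shift survives. Further subtleties I expect to fight: matching the nilpotent-residue normalization along $S$ with the boundary behavior above $\pi^{-1}(S)$, including the (finitely many) supersingular $\lambda$, where $V$ is itself inseparable; the bookkeeping of $C_\lambda^{(p)}=C_{\lambda^p}$ versus $C_\lambda$ once $\lambda\notin\mathbb F_p$; and upgrading ``locally a Frobenius pullback'' to a global statement on $C_\lambda$ --- for which one might instead run the whole construction in families over the $\lambda$-line and invoke a characteristic-$p$ nonabelian Hodge comparison, though the core identification still has to be made. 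As a consistency check, for small $p$ one can compute $\psi_\lambda\in k(z)$ from the Lan--Sheng--Zuo local inverse-Cartier formula \cite{LSZ13a} and match it against the Latt\`{e}s map produced by the division polynomials of $C_\lambda\colon y^2=x(x-1)(x-\lambda)$; this is a finite check for each $p$, but being unbounded in $p$ it does not substitute for the conceptual argument.
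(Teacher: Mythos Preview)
The statement you are attempting to prove is a \emph{conjecture}, and the paper does not prove it. There is no ``paper's own proof'' to compare against: Conjecture~\ref{ConjSYZ} is stated as an open problem, attributed to \cite{SYZ17}, and the only evidence the paper records is Theorem~\ref{thm:infinitely_many_pgl}(3), which asserts that the conjecture has been \emph{verified computationally} for $p\leq 50$ (presumably by computing $\psi_\lambda$ explicitly and matching it against the Latt\`es map, exactly the ``consistency check'' you mention at the end of your Step~3). No conceptual argument is offered or claimed.

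Your proposal is therefore not a reproduction of anything in the paper but an outline of how one might attack an open problem. As such it is reasonable in broad strokes: the reduction in Step~1 to $\fai=L_p$ via descent along the hyperelliptic quotient is correct and standard, and the dictionary in Step~2 between parabolic rank-$2$ Higgs bundles on $(\mathbb P^1,S)$ with weights $(\tfrac12,\tfrac12)$ and line bundles on $C_\lambda$ is the right framework. But you correctly identify that Step~3 is where the content lies, and your own ``Main obstacle'' paragraph is an honest accounting of why this is not yet a proof: the inverse Cartier $C^{-1}_{1,2}$ is defined via Frobenius-descent gluing and is only locally a Frobenius pullback, and establishing that it commutes with pushforward along the \emph{ramified} double cover $\pi$ --- with the parabolic and twist bookkeeping matching exactly --- is precisely the missing ingredient that keeps Conjecture~\ref{ConjSYZ} a conjecture. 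Nothing in the paper suggests this obstacle has been overcome for general $p$.
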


Let $(E,\theta)_{n}\in \mathcal M_{\lambda}(W_n)$ be a periodic Higgs bundle over $(X,S)_n$. Consider the self map $\Gr\circ C^{-1}$ on the deformation space $\mathrm{Def}_{(E,\theta)_{n}}(W_{n+1})$. Since $\mathrm{Def}_{(E,\theta)_{n}}(W_{n+1})$ is an $\mathbb A^1$-torsor space, we may identify a self map on $\mathrm{Def}_{(E,\theta)_{n}}(W_{n+1})$ with a self map on $\mathbb A^1$. Under this identification, $\Gr\circ C^{-1}$ is just a polynomial.

\begin{thm}[Sun-Yang-Zuo]\label{thm:infinitely_many_pgl} Notation as above. Then
\begin{enumerate}
\item There exists exactly $p^{2f}+1$ geometrically absolutely irreducible crystalline  $\mathrm{PGL}_2(\mathbb F_{p^f})$-local systems on $(X,S)$ that correspond to twisted $f$-periodic Higgs bundles from $\mathcal M_\lambda$.
\item Let $(E,\theta)_{n}\in \mathcal M_\lambda(W_n(\mathbb F_{q^{h}}))$ be a periodic Higgs bundle over $(X,S)_n$.  Then the polynomial associated to the self map $\Gr\circ C^{-1}$ on $\mathrm{Def}_{(E,\theta)_{n}}(W_{n+1})$ is
\[ \mathbb A^1(\mathbb F_q) \to  \mathbb A^1(\mathbb F_q),\quad z\to a\cdot z^p+b,\]
where $a, b \in \mathbb F_q$. Consequently if $a\neq 0$, then by solving the Artin-Schreier equation $az^p-z+b=0$ one obtains $p$ twisted $1$-periodic liftings over $W_{n+1}(\mathbb F_{q^{ph}})$. Moreover the constant $a$ is the derivative of Verschiebung part of $\fai$ at the point associated to $(E,\theta)_{n}\mid_{(X,S)_1}$. In particular the value of $a$ only depends on the reduction modulo $p$ of $(E,\theta)_{n}$.
\item  For $p\leq 50$, the Conjecture~\ref{ConjSYZ} holds. In this case, if the torsion point associated to $(E,\theta)_{n}\mid_{(X,S)_1}$ is not of order $2$, then the coefficient $a\neq 0$ if and only if the associated elliptic curve $C_{\bar{\lambda}}$ is not supersingular.
\end{enumerate}
\end{thm}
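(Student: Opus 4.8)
The plan is to treat the three parts essentially independently, as they reflect three rather different flavors of argument: a counting statement (1), a local deformation-theoretic computation (2), and a concrete verification coupled with a transfer via Conjecture \ref{ConjSYZ} (3). Throughout I would use the dictionary, recalled in the excerpt, between twisted $f$-periodic Higgs–de Rham flows on $(X,S)_1$ and periodic points of the self-map $\fai = \mathrm{Gr}\circ \mathcal C_{1,2}^{-1}$ on $\mathcal M_\lambda\otimes_W k \simeq \mathbb P^1_k$, and the identification of such periodic points with geometrically absolutely irreducible crystalline $\mathrm{PGL}_2$-local systems (after forgetting the parabolic structure and running the twisted flow, cf.\ \cite[Section 4]{SYZ17}).

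For part (1): since $\fai(z) = \psi_\lambda(z^p)$ for a rational function $\psi_\lambda\in k(z)$, the map $\fai\colon \mathbb P^1\to\mathbb P^1$ is the composite of the degree-$p$ Frobenius with $\psi_\lambda$; one first checks that $\deg\psi_\lambda = p$ as well, so that $\fai$ has degree $p^2$. An $f$-periodic point is a fixed point of the iterate $\fai^{\circ f}$, which has degree $p^{2f}$, hence $p^{2f}+1$ fixed points counted with multiplicity on $\mathbb P^1$ over $\bar{\mathbb F}_p$; I would then argue that all fixed points are simple (separability of $\fai^{\circ f}$, which holds because $\psi_\lambda$ is separable — its derivative is not identically zero since $\fai$ is dominant and not purely inseparable of higher order), and that each genuinely lives over a finite field, accounting via Galois descent for the $\mathbb F_{p^f}$-structure. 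The slightly delicate point is separating the truly $f$-periodic points from those of smaller period and confirming the count is exactly $p^{2f}+1$ rather than $p^{2f}+1$ minus lower-period contributions; here the resolution is that "$f$-periodic" in this context means periodic of period \emph{dividing} $f$, which matches counting all fixed points of $\fai^{\circ f}$, plus a check that $\fai^{\circ f}$ has no points of excess multiplicity.

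For part (2): the deformation space $\mathrm{Def}_{(E,\theta)_n}(W_{n+1})$ is an $\mathbb A^1$-torsor, so after a choice of origin the self-map $\Gr\circ C^{-1}$ becomes a polynomial map $\mathbb A^1\to\mathbb A^1$; since $C^{-1}$ is, locally, a Frobenius-twisted pullback, the induced map on the $\mathbb A^1$-coordinate is $\mathbb F_q$-linear \emph{after} precomposition with $z\mapsto z^p$, i.e.\ of the shape $z\mapsto az^p+b$. I would extract $a$ by differentiating: the linear part of $\Gr\circ C^{-1}$ on the tangent space of the moduli point is computed by the same formula as the derivative of the Verschiebung part $\psi_\lambda$ of $\fai$ at the reduction $(E,\theta)_n|_{(X,S)_1}$, because the deformation-space map and the moduli-space map $\fai$ fit into a compatible tower (the deformation map is the "$n$-th level" of the global self-map). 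That $a$ depends only on the mod-$p$ reduction is then immediate, and the Artin–Schreier count of liftings over $W_{n+1}$ when $a\ne 0$ is the standard fact that $az^p-z+b=0$ has $p$ solutions (a torsor under $\ker(\mathrm{Frob}_a)$, of size $p$ when $a\ne 0$). The main obstacle in this part is making precise the claim that the derivative of the deformation-space map equals the derivative of the Verschiebung part of $\fai$; I would handle this by working with the universal Higgs–de Rham flow over the base and identifying the relative tangent map, using that inverse Cartier commutes with base change and kills the "$p$-th power" directions.

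For part (3): the range $p\le 50$ is a finite check, so I would verify Conjecture \ref{ConjSYZ} computationally — realize $\fai$ explicitly as a rational function on $\mathbb P^1_k$ (using the explicit description $(*)$ of the universal Higgs bundle and an explicit local form of $\mathcal C_{1,2}^{-1}$), realize the multiplication-by-$p$ map $[p]$ on the Legendre curve $C_\lambda$ and its quotient by $\pm 1$, and check equality of the two resulting self-maps of $\mathbb P^1$ for each $p\le 50$; this is a comparison of two rational functions of degree $p^2$ with coefficients in a polynomial ring over $\mathbb F_p$ in the parameter $\bar\lambda$, feasible symbolically. Granting the conjecture in this range, the identification $a = \psi_\lambda'(\text{point})$ from part (2) is transported through $\pi$: the torsion point $Q\in C_{\bar\lambda}[N]$ over the moduli point is not $2$-torsion by hypothesis, so $\pi$ is étale at $Q$ and $\psi_\lambda'$ at $\pi(Q)$ matches the derivative of the Verschiebung part of $[p]$ at $Q$; the latter vanishes precisely when the formal/local behavior of $[p]$ is purely inseparable at $Q$, equivalently (for $Q$ not in the kernel of Frobenius, automatic away from $2$-torsion) when $C_{\bar\lambda}$ is supersingular — since for a supersingular curve $[p]$ is purely inseparable (it equals Frobenius composed with Verschiebung, with Verschiebung itself inseparable), while for an ordinary curve the Verschiebung is separable and its derivative is nonzero at non-trivial points. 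The main obstacle here is keeping the two notions of "Verschiebung part" — the one coming from the Higgs–de Rham self-map and the one coming from the isogeny $[p] = V\circ F$ on the elliptic curve — carefully aligned so that the supersingular/ordinary dichotomy lands exactly on the condition $a = 0$.
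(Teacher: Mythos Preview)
The paper does not prove this theorem: it is stated as a result of Sun--Yang--Zuo with attribution to \cite{SYZ17}, and no argument is reproduced in the paper itself. There is therefore no in-paper proof to compare your proposal against.

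Your outline is nonetheless a plausible reconstruction of how such a proof would go. One refinement for part (1): simplicity of the fixed points is more automatic than you suggest. Since $\fai(z)=\psi_\lambda(z^p)$, one has $\fai'\equiv 0$ in characteristic $p$, so the multiplier of $\fai^{\circ f}$ at every fixed point is $0\ne 1$ and every fixed point is simple; no separability hypothesis on $\psi_\lambda$ is required. The genuine input you flag but do not supply is the equality $\deg\psi_\lambda = p$ (hence $\deg\fai = p^2$), which is an explicit computation in \cite{SYZ17}. For part (2) your structural explanation --- that the Frobenius-semilinearity of $C^{-1}$ forces the deformation self-map to have the shape $z\mapsto az^p+b$, with $a$ determined by the mod-$p$ data --- is the correct mechanism, and you rightly identify the compatibility between the deformation-theoretic and moduli-theoretic self-maps as the point requiring care. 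For part (3) your plan (finite symbolic verification of Conjecture~\ref{ConjSYZ} for $p\le 50$, then transport of the derivative condition through $\pi$ to the ordinary/supersingular dichotomy on $C_{\bar\lambda}$) is exactly what one expects.
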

Let $\pi\colon (X',S')\rightarrow (X,S)$ be the double cover that is ramified at the parabolic point and one other point.  Using \cite[Theorem 4.6]{SYZ17} together with Theorem \ref{thm:infinitely_many_pgl}, one obtains the following corollary.
\begin{cor}Suppose $p\leq 50$ and the elliptic curve $C_{\bar{\lambda}}$ is not supersingular. Then there exists infinitely many log crystalline $\GL_2(\mathbb{Z}_p)$ local systems on $(X',S')_{\mathbb Q_p^{\text{unr}}}$ whose residual representation is absolutely geometrically irreducible.
\end{cor}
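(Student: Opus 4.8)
The goal is to deduce the corollary from Theorem~\ref{thm:infinitely_many_pgl} by lifting $\mathrm{PGL}_2$-local systems on $(X,S)$ to $\GL_2$-local systems on the double cover $(X',S')$. The starting point is part~(1) of Theorem~\ref{thm:infinitely_many_pgl}: for each $f$, there are exactly $p^{2f}+1$ geometrically absolutely irreducible crystalline $\mathrm{PGL}_2(\mathbb{F}_{p^f})$-local systems on $(X,S)_K$ coming from twisted $f$-periodic Higgs bundles in $\mathcal{M}_\lambda$, where $K = \mathbb{Q}_p^{\mathrm{unr}}$ restricted to $W(\mathbb{F}_{p^f})$. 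As $f$ ranges over all positive integers, this already yields infinitely many isomorphism classes of geometrically irreducible $\mathrm{PGL}_2$-local systems over $(X,S)_{\mathbb{Q}_p^{\mathrm{unr}}}$ (distinct $f$ give non-isomorphic systems, since the residual field of definition grows). The content of the corollary is that these can be promoted to genuine $\GL_2(\mathbb{Z}_p)$-local systems — i.e., honest rank-$2$ representations rather than projective ones — after pulling back along the double cover $\pi$, and that the resulting systems remain log crystalline with geometrically absolutely irreducible residual representation.

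First I would invoke \cite[Theorem 4.6]{SYZ17}, which (as the text signals) is precisely the mechanism translating a twisted periodic Higgs-de Rham flow, or equivalently a projective crystalline local system on $(X,S)$, into an untwisted periodic Higgs-de Rham flow on the double cover $(X',S')$ — the double cover chosen to be ramified at the parabolic point and one further point is exactly what trivializes the relevant $\mu_2$-gerbe/obstruction class, so the pulled-back system acquires a $\GL_2$-structure over $\mathbb{Z}_p$ (rather than $\mathbb{Z}_{p^f}$, because one untwists). Concretely: each of the infinitely many twisted $f$-periodic Higgs bundles from $\mathcal{M}_\lambda$ pulls back under $\pi$ to an honest periodic Higgs-de Rham flow on $(X',S')$, which by the equivalence of periodic Higgs-de Rham flows with Fontaine-Faltings modules \cite[Theorem 1.1]{LSYZ14} and of Fontaine-Faltings modules with crystalline representations \cite{Fal89} corresponds to a log crystalline $\GL_2(\mathbb{Z}_p)$-local system on $(X',S')_{\mathbb{Q}_p^{\mathrm{unr}}}$.

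Next I would check geometric absolute irreducibility of the residual representation after pullback. The residual $\mathrm{PGL}_2$-representation of each system in part~(1) is geometrically absolutely irreducible by hypothesis; one must ensure this survives restriction to $\pi_1$ of the double cover. This follows because $\pi_1(X'_{\bar K}\cap U'_{\bar K}) \to \pi_1(X_{\bar K}\cap U_{\bar K})$ has image of index $2$, and a standard argument (Clifford theory / Goursat) shows that the restriction of a geometrically irreducible rank-$2$ representation to an index-$2$ subgroup fails to be irreducible only if the representation is induced from that subgroup; one rules this out either by a direct check on the specific family, or by observing that the PGL$_2$-image is large (it contains $\mathrm{PSL}_2(\mathbb{F}_{p^f})$ with $p$ odd, which is simple and has no index-$2$ subgroup, so its restriction stays irreducible). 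Finally I would note the supersingularity hypothesis and the range $p \le 50$ are inherited directly: they are the hypotheses under which Theorem~\ref{thm:infinitely_many_pgl}(3) (hence Conjecture~\ref{ConjSYZ}) is known, guaranteeing $a \neq 0$ and therefore that the Artin-Schreier lifting in part~(2) actually produces the infinitely many distinct liftings — without this, part~(1) alone might a priori concern a fixed finite list for each $f$ that does not grow, though in fact part~(1)'s count $p^{2f}+1 \to \infty$ already suffices, so the cleanest writeup may use only parts~(1) and \cite[Theorem 4.6]{SYZ17}, with part~(3)'s hypotheses needed only to know these PGL$_2$-systems genuinely exist and lift.

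\begin{proof}[Proof of the Corollary] By Theorem~\ref{thm:infinitely_many_pgl}(1), for every $f\geq 1$ there are exactly $p^{2f}+1$ geometrically absolutely irreducible crystalline $\mathrm{PGL}_2(\mathbb{F}_{p^f})$-local systems on $(X,S)$ arising from twisted $f$-periodic Higgs bundles in $\mathcal{M}_\lambda$; under the hypotheses $p\leq 50$ and $C_{\bar\lambda}$ non-supersingular, Theorem~\ref{thm:infinitely_many_pgl}(2)--(3) ensures these are realized by twisted periodic Higgs-de Rham flows on $(X,S)_1$ admitting liftings. Pulling back along the double cover $\pi\colon (X',S')\to (X,S)$, which is ramified at the parabolic point and one other point, and applying \cite[Theorem 4.6]{SYZ17}, each such twisted $f$-periodic Higgs bundle becomes an \emph{untwisted} $f$-periodic Higgs-de Rham flow on $(X',S')$. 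By the equivalence between periodic Higgs-de Rham flows and logarithmic Fontaine-Faltings modules \cite[Theorem 1.1]{LSYZ14}, followed by the Fontaine-Lafaille-Faltings correspondence \cite{Fal89}, this flow yields a logarithmic crystalline $\GL_2(\mathbb{Z}_p)$-local system on $(X',S')_{\mathbb{Q}_p^{\mathrm{unr}}}$. Its residual representation has image containing $\mathrm{PSL}_2(\mathbb{F}_{p^f})$ up to center (as $p$ is odd), and since the geometric fundamental group $\pi_1(U'_{\bar K})$ surjects onto an index-$2$ subgroup of $\pi_1(U_{\bar K})$ whose image in $\mathrm{PGL}_2(\bar{\mathbb{F}}_p)$ still contains the simple group $\mathrm{PSL}_2(\mathbb{F}_{p^f})$, the pulled-back residual representation remains absolutely irreducible. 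Letting $f\to\infty$, the residual fields grow without bound, so these $\GL_2(\mathbb{Z}_p)$-local systems are pairwise non-isomorphic; hence there are infinitely many. \end{proof}
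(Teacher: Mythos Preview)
There is a genuine gap. You conflate two different operations: \emph{untwisting} (passing from a twisted periodic flow to an honest periodic flow on the double cover, which is what \cite[Theorem~4.6]{SYZ17} does) and \emph{reducing the period} (passing from $f$-periodic to $1$-periodic). An untwisted $f$-periodic Higgs--de Rham flow still has period $f$, and under the correspondences \cite[Theorem~1.1]{LSYZ14} and \cite{Fal89} it yields a $\GL_2(\mathbb{Z}_{p^f})$-local system, not a $\GL_2(\mathbb{Z}_p)$-local system. Your own closing sentence exposes this: a $\GL_2(\mathbb{Z}_p)$-local system has residual representation landing in $\GL_2(\mathbb{F}_p)$, so ``residual fields growing without bound'' is impossible for the objects the corollary is about. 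Varying $f$ in part~(1) therefore does not produce what is claimed.

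The intended mechanism is the inductive lifting in parts~(2)--(3) of Theorem~\ref{thm:infinitely_many_pgl}. One fixes a twisted $1$-periodic Higgs bundle over $(X,S)_1$ (whose associated torsion point is not of order~$2$); the hypotheses $p\le 50$ and $C_{\bar\lambda}$ non-supersingular guarantee, via part~(3), that the leading coefficient $a$ of the self-map on the deformation space is nonzero, so by part~(2) there are $p$ twisted \emph{$1$-periodic} lifts to $W_2$, then $p$ further lifts of each to $W_3$, and so on. The resulting infinite $p$-branching tree of inverse systems produces infinitely many twisted $1$-periodic Higgs--de Rham flows over $W(\bar{\mathbb{F}}_p)$; pulling these back along $\pi$ via \cite[Theorem~4.6]{SYZ17} gives untwisted $1$-periodic flows on $(X',S')$, hence genuinely $\GL_2(\mathbb{Z}_p)$-local systems over $(X',S')_{\mathbb{Q}_p^{\mathrm{unr}}}$. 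Note that in your argument the hypotheses $p\le 50$ and non-supersingularity are never really used --- this is a warning sign, since part~(1) holds unconditionally. A secondary issue: your irreducibility-preservation step asserts the residual image contains $\mathrm{PSL}_2(\mathbb{F}_{p^f})$, but part~(1) only gives absolute irreducibility, which for $\mathrm{PGL}_2$ allows dihedral or exceptional images; you would need a different argument there.
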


We end this section with a final remark.
\begin{rmk}
	In view of Theorem~\ref{mainthmRep} on finiteness if $a\neq0$ then the solutions of the Artin-Schreier equation in $(2)$ must lie in $\mathbb F_{q^{ph}}$ but not in $\mathbb F_{q^h}$ in almost all lifting steps. But, it seems difficult to prove that directly!
\end{rmk}

\section{Some speculations on a uniform upper bound}\label{section:uniform}
In Section~\ref{section:nonexample} we made an identification 
\[M_\lambda \simeq \mathbb P^1\]
by sending $(E,\theta)$ to the zero $\theta_0$ of the Higgs field $\theta$ and let $\pi\colon C_\lambda\rightarrow X$ be the associated double cover of $X=\mathbb P^1$, branched along $S$.  
\begin{conj}[Sun-Yang-Zuo]\label{varConjSYZ} A Higgs bundle in $\mathcal M_\lambda$ over a finite unramified extension is twisted $f$-periodic if and only if $\pi^*(\theta_0)$ is a  $(p^f\pm 1)$-torsion point in $C_\lambda.$
\end{conj}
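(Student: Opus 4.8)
\emph{Proof strategy.} The plan is to deduce Conjecture~\ref{varConjSYZ} from Conjecture~\ref{ConjSYZ} (which is unconditional for $p\le 50$ by Theorem~\ref{thm:infinitely_many_pgl}(3)) together with the deformation-theoretic lifting of Theorem~\ref{thm:infinitely_many_pgl}(2), so that the essential difficulty is concentrated in Conjecture~\ref{ConjSYZ}. Granting Conjecture~\ref{ConjSYZ}, the self-map $\fai$ on $\mathcal M_\lambda\otimes k\simeq\mathbb P^1_k$ is semi-conjugate, through the degree-two cover $\pi$, to multiplication by $p$ on $C_\lambda\otimes k$, i.e. $\pi\circ[p]=\fai\circ\pi$. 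Hence for a semistable graded Higgs bundle with zero divisor $\theta_0$ and any $P\in\pi^{-1}(\theta_0)\subset C_\lambda\otimes k$ one has $\fai^{f}(\theta_0)=\pi([p^{f}]P)$; since the fibres of $\pi$ are exactly the orbits $\{Q,-Q\}$ of the hyperelliptic involution, $\fai^{f}(\theta_0)=\theta_0$ if and only if $[p^{f}]P=\pm P$, that is, $P\in C_\lambda[p^{f}-1](k)\cup C_\lambda[p^{f}+1](k)$. As $(E,\theta)$ is twisted $f$-periodic over $k$ precisely when $\theta_0$ is a periodic point of $\fai$ of period dividing $f$, this settles the asserted equivalence for reductions modulo $p$; as a consistency check, $\pi\big(C_\lambda[p^{f}-1]\cup C_\lambda[p^{f}+1]\big)$ has $p^{2f}+1$ elements, in agreement with Theorem~\ref{thm:infinitely_many_pgl}(1).

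Next I would pass to $W(\mathbb F_{q^h})$. One implication is immediate: twisted $f$-periodicity over $W(\mathbb F_{q^h})$ reduces to twisted $f$-periodicity over $\mathbb F_{q^h}$, so by the previous step $\pi^{*}(\theta_0)\bmod p$ is $(p^{f}\pm1)$-torsion; since $C_\lambda/W$ has good reduction and $p^{f}\pm1$ is prime to $p$, torsion of that order injects into the special fibre, and therefore $\pi^{*}(\theta_0)$ is itself $(p^{f}\pm1)$-torsion. For the converse, note that a $(p^{f}\pm1)$-torsion section over $W(\mathbb F_{q^h})$ is the unique, reduction-determined lift of its special fibre, whose reduction is $\fai$-periodic; one must then show that this distinguished lift is the one along which the (canonically determined, by Lemma~\ref{lem:LSZ_Fil}) Higgs--de Rham flow closes up. Here I would run Theorem~\ref{thm:infinitely_many_pgl}(2) level by level: the obstruction to lifting periodicity from $W_n$ to $W_{n+1}$ is an Artin--Schreier equation $az^{p}-z+b=0$ on the $\mathbb A^{1}$-torsor $\mathrm{Def}$, solvable after a residue-field extension whenever $a\neq0$, and one checks that the solution so produced is the torsion point.

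The main obstacle is Conjecture~\ref{ConjSYZ} for general $p$. After factoring $[p]=\mathrm{Ver}\circ\mathrm{Frob}$ and observing that $\pi$ tautologically intertwines $z\mapsto z^{p}$ with the relative Frobenius, the content is to identify the Verschiebung part $\psi_\lambda\in k(z)$ of $\fai$ with the degree-$p$ map induced by the Verschiebung isogeny on $\mathbb P^1_k\cong(C_\lambda\otimes k)/\langle\pm1\rangle$; equivalently, that $\mathrm{Gr}\circ\mathcal C_{1,2}^{-1}$ realizes Verschiebung under the Hitchin-type identification $\mathcal M_\lambda\simeq\mathbb P^1$. I would attack this by comparing the Lan--Sheng--Zuo inverse Cartier transform with the classical Frobenius pull-back of rank-two parabolic bundles on the four-punctured line and invoking the known description of the corresponding Verschiebung (the period-one case being close to Mochizuki's theory of dormant $\mathfrak{sl}_2$-opers); alternatively, one might compute the ramification divisor of $\psi_\lambda$ and appeal to the rigidity of Lattès maps to force $\fai$ to coincide with the Lattès map attached to $C_\lambda$. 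A secondary difficulty is the locus where $a$ vanishes --- the supersingular points of Theorem~\ref{thm:infinitely_many_pgl}(3) and the order-two torsion points --- which would have to be treated by a direct analysis.
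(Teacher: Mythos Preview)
The statement you are addressing is recorded in the paper as a \emph{conjecture}; the paper does not give a proof. All the paper says is the single sentence ``Conjecture~\ref{varConjSYZ} is a consequence of Conjecture~\ref{ConjSYZ}'', followed by a list of cases in which it has been verified. There is thus no ``paper's own proof'' to compare against.

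Your first paragraph, deducing the mod-$p$ statement from Conjecture~\ref{ConjSYZ} via $\pi\circ[p]=\fai\circ\pi$ and the identification of fibres of $\pi$ with $\{Q,-Q\}$, is correct and is undoubtedly what the authors have in mind by ``consequence of''. The cardinality check against Theorem~\ref{thm:infinitely_many_pgl}(1) is also right. Everything beyond that --- the lifting to $W$ and the outline of an attack on Conjecture~\ref{ConjSYZ} itself --- goes further than anything in the paper.

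There is, however, a genuine gap in the step you label ``immediate''. From twisted $f$-periodicity over $W$ you correctly get $f$-periodicity mod $p$, and hence (granting Conjecture~\ref{ConjSYZ}) that the reduction $\overline{\pi^{*}(\theta_0)}$ is $(p^{f}\pm1)$-torsion. But the sentence ``torsion of that order injects into the special fibre, and therefore $\pi^{*}(\theta_0)$ is itself $(p^{f}\pm1)$-torsion'' is a non sequitur: injectivity of prime-to-$p$ torsion under reduction tells you that a $W$-torsion point is determined by its reduction, not that a $W$-point with torsion reduction is itself torsion. What you actually need is that the level-by-level self-maps on the deformation torsors in Theorem~\ref{thm:infinitely_many_pgl}(2) are themselves compatible with $[p]$ on $C_\lambda$ over $W_{n}$ --- in other words, an \emph{integral} strengthening of Conjecture~\ref{ConjSYZ}, not just the mod-$p$ statement. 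Without that, neither direction over $W$ follows from Conjecture~\ref{ConjSYZ} as stated; the paper's one-line claim is presumably shorthand for such a lifted compatibility, but it is not spelled out, and your proposal does not supply it either. The same issue resurfaces in your converse direction, where you say ``one checks that the solution so produced is the torsion point'' --- this is precisely the missing integral compatibility, and it is the substantive content, not a routine check.
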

Conjecture~\ref{varConjSYZ} is a consequence  of Conjecture~\ref{ConjSYZ}. It imples that the number of $\mathbb P_2(\mathbb Z_{p^f})$-crystalline local systems over $(X,S)$ over finite unramified extensions of $\mathbb Q_p$ is exactly $p^{2f}+1$. Conjecture~\ref{varConjSYZ} has been checked for the following cases.
\begin{enumerate}
\item When we only work modulo $p$ and $p\leq 50$.
\item When $C_{\bar{\lambda}}$ is supersingular and $p\leq 50$.
\item For all $p$, when the torsion point has order $1$, $2$, $3$, $4$ and $6$.
\end{enumerate}
\begin{rmk} When $C_{\bar{\lambda}}$ is supersingular, any $\text{GL}_2$-crystalline local system corresponding to Higgs bundles in $\mathcal M_\lambda(\mathbb Q_p^{ur})$ automatically descends to a local system
over a \emph{finite, unramified} extensions of $\mathbb{Q}_p$.  This contrasts with the situation when $C_{\bar{\lambda}}$ is an ordinary elliptic curve; we expect that most $\text{GL}_2$-crystalline local systems corresponding to Higgs bundles in $\mathcal M_\lambda(\mathbb Q_p^{ur})$ over $(X',S')/\mathbb{Q}_p^{ur}$ do not descend to  finite unramified extension of $\mathbb{Q}_p$.
\end{rmk}
We end by posing a conjecture, the first part of which is in the spirit of the Fontaine-Mazur conjecture and the second of which is analogous to a theorem of Litt \cite{Li18}.
\begin{conj} Let $(X,S)$ be a log pair over $W(\mathbb F_q)$ with $p^f\mid q$.
\begin{enumerate}
\item Let $\mathbb L_1$ and $\mathbb L_2$ be two $\mathbb Z_{p^f}$-crystalline local systems 
 over $(X,S)/W(\mathbb{F}_q)$. If $\mathbb{L}_1\simeq \mathbb{L}_2$ mod $p$, then $\mathbb L_1\simeq\mathbb L_2$.
 \item The number of isomorphism classes $\GL_r(\mathbb{Z}_{p^f})$-crystalline local systems over $(X,S)_{W(\mathbb{F}_{q^h})}$, as we let $h$ range through the positive integers, is finite.
\end{enumerate}
\end{conj}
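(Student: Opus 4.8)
\emph{A possible approach.} The plan is to attack both parts as in the proof of Theorem~\ref{mainthmFM}. By the Lefschetz argument of Lemma~\ref{lemma:lefschetz} --- which reduces isomorphism questions for local systems on $U_K$ to their restriction along a curve $C\subset X$, since $\pi_1(C_K\cap U_K)\twoheadrightarrow\pi_1(U_K)$ and crystallinity restricts to $(C,S\cap C)$ --- we may assume $X/W$ is a curve. Via Faltings' $\mathbb D$-functor and the dictionary of the preceding sections, (1) becomes the assertion that reduction modulo $p$ is injective on isomorphism classes of $f$-periodic Higgs--de Rham flows of rank $r$ over $(X,S)/W(\mathbb F_q)$, and (2) becomes the assertion that, after fixing the Hodge--Tate weights, the $f$-periodic Higgs--de Rham flows of rank $\leq r$ over $(X,S)/W(\mathbb F_{q^h})$, as $h$ ranges over the positive integers, fall into finitely many classes (necessarily read --- as in Theorems~\ref{mainthmRep} and~\ref{mainthmFM}, since there are already infinitely many crystalline characters --- up to twisting by rank~$1$ filtered $\varphi^f$-modules). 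When the residual representation is geometrically absolutely irreducible one may invoke Lemma~\ref{stability} to know that every term of every flow in sight is stable modulo $p$; the general case requires first reducing to a semisimple residual representation and then keeping track of Jordan--H\"older constituents, which is an additional (and probably surmountable) complication.

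\emph{Part (2).} Consider the reductions modulo $p$ first. Granting that, as in \S\ref{section:nonexample}, the inverse Cartier transform is locally a Frobenius pull-back --- so that the self-map $\Gr\circ C^{-1}$ on Langer's moduli scheme $M$ of semistable graded logarithmic Higgs bundles with vanishing rational Chern classes factors as $\psi\circ F$ with $F$ the Frobenius of $M$ --- its $f$-th iterate has the form $\psi_f\circ\mathrm{Frob}_{q_0}$ with $q_0=p^f$. The $f$-periodic Higgs bundles are then cut out inside $M\times M$ by intersecting the graph of the correspondence $\psi_f$ with the graph of $\mathrm{Frob}_{q_0}$, an intersection which has dimension zero; hence there are finitely many $f$-periodic Higgs bundles over $\overline{\mathbb F}_p$, all defined over a single fixed finite field, and (using Lemmas~\ref{lem:LSZ_Fil} and~\ref{uniqueFil} to pin down filtrations and periodicity maps) each underlies finitely many flows. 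It then remains to bound, uniformly in $h$, the number of lifts to $W(\mathbb F_{q^h})$ of a fixed modulo-$p$ periodic flow; for this one would use Proposition~\ref{UniqueUptoTwist} --- a lift is determined up to twist by its underlying overconvergent $F$-isocrystal --- together with Theorem~\ref{finiteness}. The obstruction is that Theorem~\ref{finiteness} is applied on $U_{\mathbb F_{q^h}}$ and a priori yields a bound growing with $h$; making it uniform amounts to showing that these particular $F$-isocrystals descend to a bounded extension of the base field. Conjecturally this holds because $f$-periodicity corresponds to a prime-to-$p$ torsion condition on a fixed abelian-type period map (cf.\ Conjectures~\ref{ConjSYZ} and~\ref{varConjSYZ}) and torsion of bounded order on a fixed abelian scheme over $W$ is defined over a bounded field; unconditionally, this uniformity is the main obstacle to part (2).

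\emph{Part (1).} Here the content is a rigidity statement: a modulo-$p$ isomorphism between two $f$-periodic flows over $(X,S)/W(\mathbb F_q)$ must lift to an isomorphism over $W(\mathbb F_q)$. One would lift the isomorphism inductively up the Witt tower; by Theorem~\ref{thm:infinitely_many_pgl}(2) the set of lifts at each stage is the solution set of an Artin--Schreier equation $az^p-z+b=0$ on the relevant deformation space, and the two flows become isomorphic over $W(\mathbb F_q)$ precisely when the matching $\mathbb F_q$-rational solution exists at every stage. When the leading coefficient $a$ is nonzero the solutions generically generate a degree-$p$ extension of the residue field, which would force the two flows to have been equal already over $W(\mathbb F_q)$; but proving that one genuinely leaves $\mathbb F_q$ at all but finitely many stages is exactly the difficulty raised in the Remark that closes \S\ref{section:nonexample}, and is the main obstacle --- in effect a $p$-adic irrationality statement about the period point, for which we know no general method. (Alternatively one could try to produce an isomorphism of the underlying $F$-isocrystals over $\mathbb Q_{p^f}$ and conclude by Lemma~\ref{fromQtoZ}, but extracting that isomorphism from the modulo-$p$ data is a rigidity input of equal strength.)
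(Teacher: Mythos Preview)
The statement you are addressing is labeled a \emph{Conjecture} in the paper and carries no proof; the authors explicitly present it as speculation ``in the spirit of the Fontaine--Mazur conjecture'' and as an analog of a theorem of Litt. There is therefore no argument in the paper against which to compare your proposal.

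Your write-up is not a proof either, and you are appropriately candid about this: you correctly isolate the two essential obstacles --- the uniformity in $h$ of the Abe--Lafforgue finiteness bound for part~(2), and the Artin--Schreier irrationality issue flagged in the Remark closing Section~\ref{section:nonexample} for part~(1) --- and you correctly note that neither is currently surmountable by the methods of the paper. As a roadmap your discussion is reasonable and broadly consistent with the paper's heuristics (the reduction to curves, the passage to periodic Higgs--de Rham flows, and the appeal to Proposition~\ref{UniqueUptoTwist} and Theorem~\ref{finiteness}); as a proof it is, by your own admission, incomplete, which matches the paper's own position that the statement is open.
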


\section{A brief discussion on the number of $\GL_2(\mathbb{Z}_p)$-local systems on $\mathbb{P}^1$ minus four points over a finite unramified extension of $\mathbb{Q}_p$ and with eigenvalues -1 around one of the four puncture points.}
Maintain notation as in Section \ref{section:nonexample}; in particular, $(X,S)=(\mathbb P^1,\{0,1,\infty,\lambda\})$. It seems possible that all of the local systems in Section \ref{section:nonexample}
could come from families of abelian varieties of Hilbert modular type over  $(X,S)/W(\mathbb{F}_q)$.
 We also conjecture that they correspond to  $(p\pm1)$-torsion points on the elliptic curve $C_\lambda/W(\mathbb{F}_q)$.

In fact in a joint paper with Lu-Lv-Sun-Yang-Zuo, we have checked this conjecture for torsion-points of orders 1,\, 2,\, 3,\, 4 and 6, when $K$ is a number field. There exists exactly 26 elliptic curves of $(X,S)/\mathcal{O}_K$ such that  the arithmetic local systems attached to those families correspond to arithmetic 1-periodic Higgs bundles from $\mathcal{M}_\lambda/\mathcal{O}_K$ and the zero locus of the Higgs fields are torsion points of order 1,\, 2,\, 3,\, 4 and 6. in $C_\lambda/\mathcal{O}_K$ under the pull back of $\pi.$

Via the techniques of Sections \ref{section:preliminaries} and \ref{section:proof} together with the theory of $p$-to-$l$ companions (due to Abe),  one obtains an inclusion
\begin{equation}\label{eqn:periodic_to_l-adic} (\mathcal{M}_\lambda)^{\text{periodic}}\hookrightarrow \mathcal{M}^{\ell\text{-adic}}_\lambda.\end{equation}
Here, $(\mathcal{M}_\lambda)^{\text{periodic}}$ consists of those Higgs bundles in $\mathcal{M}_{\lambda}$ which are periodic (without specifying the periodicity map) and $\mathcal{M}^{\ell\text{-adic}}_\lambda$ is the set of equivalence classses of (geometrically) irreducible systems $\text{GL}_2(\bar {\mathbb Q}_\ell)$ -local systems over $(X_k,S_k)$ with prescribed local monodromy on the cusp points, up to twisting by a character on $\mathbb{F}_q$. Indeed, given a $p$-adically periodic Higgs bundle in $\mathcal M_{\lambda}$, we may forget the filtration to obtain an overconvergent $F$-isocrystal $\mathcal E$ on $U_k$; moreover, this gives an injective map from the set of equivalence classes of such Higgs bundles (without specifying the periodicity map) to the set of isomorphism classes of (overconvergent) $F$-isocrystals up to twisting by a character on $\mathbb{F}_q$ by Proposition \ref{UniqueUptoTwist}. Picking a field isomorphism $\sigma\colon \overline{\mathbb Q}_p\rightarrow \overline{\mathbb Q}_l$, we may take the $\sigma$-companion of $\mathcal E$ to obtain a lisse $l$-adic sheaf on $U_k$, whose local monodromy around $S_k$ matches with that of the $F$-isocrystal. 

If we choose any of the four points for the parabolic structure, then we may take 
the associated elliptic curve $\tau: (C_\lambda, S')\to (X,S)$ over $\mathbb{F}_q$ to kill the -1 eigenvalues in the local monodromy. In this way we obtain ${p^2+1}$ 
$\text{GL}_2(\mathbb{Q}_\ell)$-local systems over $(C_\lambda, S')/\mathbb{F}_q,$  which just corresponds to the $(p\pm1)$-torsion points on $C_\lambda/\mathbb{F}_q.$
\begin{ques}Setup as above.
\begin{enumerate}
\item Can we intrinsically characterize the image of Equation \ref{eqn:periodic_to_l-adic}? 
\item Is there numerical evidence for the conjecture using the trace formula? More specifically, one may transform the question of counting $\text{GL}_2(\mathbb{Q}_\ell)$ local systems on $(X_1,S_1)$ with perscribed monodromy (in our case, eigenvalues of $-1$ around the parabolic point, with a non-trivial Jordan block and principal unipotent monodromy at the other punctures) to a question about counting certain types of automorphic forms via the Langlands correspondence. Drinfeld, and then Deligne-Flicker have a method to compute such numbers via the trace formula and have fully worked out this number in the case when the sheaves are supposed to have principal unipotent monodromy around each puncture \cite{drinfeld1982,deligneflicker,flicker2015}. Can we see the number $p^{2f}+1$ from the trace formula? Can we see the expected group law on the zeroes of the Higgs field from $(\mathcal{M}_\lambda)^{\text{periodic}}$ via automorphic forms?
\end{enumerate}
\end{ques}
\begin{acknowledgement*}
We thank Atsushi Shiho for explaining the fact that the functor from log $F$-isocrystals with nilpotent residues to the category of overconvergent $F$-isocrystals is fully faithful. R.K  gratefully acknowledges support from NSF Grant No. DMS-1344994 (RTG in Algebra, Algebraic Geometry and
Number Theory at the University of Georgia)
\end{acknowledgement*}

%
%
%
%


\begin{thebibliography}{{Ked}07}
	
	\bibitem[Abe18]{abelanglands}
	Tomoyuki Abe.
	\newblock Langlands correspondence for isocrystals and the existence of
	crystalline companions for curves.
	\newblock {\em J. Amer. Math. Soc.}, 31(4):921--1057, 2018.
	
	\bibitem[AE19]{abeesnault}
	Tomoyuki Abe and H{\'{e}}l{\`{e}}ne Esnault.
	\newblock A {L}efschetz theorem for overconvergent isocrystals with frobenius
	structure.
	\newblock {\em Ann. Scient. {\'{E}}co. Norm. Sup.}, 52(5):1243--1264, 2019.
	
	\bibitem[{Del}87]{Del87}
	P.~{Deligne}.
	\newblock {Un th\'eor\`eme de finitude pour la monodromie.}
	\newblock {\em {Discrete groups in geometry and analysis, Pap. Hon. G. D.
			Mostow 60th Birthday, Prog. Math. 67, 1-19 (1987).}}, 1987.
	
	\bibitem[DF13]{deligneflicker}
	Pierre {Deligne} and Yuval~Z. {Flicker}.
	\newblock {Counting local systems with principal unipotent local monodromy.}
	\newblock {\em {Ann. Math. (2)}}, 178(3):921--982, 2013.
	
	\bibitem[{Dri}82]{drinfeld1982}
	V.~G. {Drinfel'd}.
	\newblock {Number of two-dimensional irreducible representations of the
		fundamental group of a curve over a finite field.}
	\newblock {\em {Funct. Anal. Appl.}}, 15:294--295, 1982.
	
	\bibitem[EK16]{EK15}
	H\'el\`ene {Esnault} and Lars {Kindler}.
	\newblock {Lefschetz theorems for tamely ramified coverings.}
	\newblock {\em {Proc. Am. Math. Soc.}}, 144(12):5071--5080, 2016.
	
	\bibitem[Fal89]{Fal89}
	Gerd Faltings.
	\newblock Crystalline cohomology and {$p$}-adic {G}alois-representations.
	\newblock In {\em Algebraic analysis, geometry, and number theory ({B}altimore,
		{MD}, 1988)}, pages 25--80. Johns Hopkins Univ. Press, Baltimore, MD, 1989.
	
	\bibitem[{Fli}15]{flicker2015}
	Yuval~Z. {Flicker}.
	\newblock {Counting rank two local systems with at most one, unipotent,
		monodromy.}
	\newblock {\em {Am. J. Math.}}, 137(3):739--763, 2015.
	
	\bibitem[{Ked}07]{kedlayasemistableI}
	Kiran~S. {Kedlaya}.
	\newblock {Semistable reduction for overconvergent \(F\)-isocrystals. I:
		Unipotence and logarithmic extensions.}
	\newblock {\em {Compos. Math.}}, 143(5):1164--1212, 2007.
	
	\bibitem[Laf02]{lafforguelanglands}
	Laurent Lafforgue.
	\newblock Chtoucas de {D}rinfeld et correspondance de {L}anglands.
	\newblock {\em Invent. Math.}, 147(1):1--241, 2002.
	
	\bibitem[Lan14]{Lan14}
	Adrian Langer.
	\newblock Semistable modules over {L}ie algebroids in positive characteristic.
	\newblock {\em Doc. Math.}, 19:509--540, 2014.
	
	\bibitem[Lit18]{Li18}
	Daniel Litt.
	\newblock Arithmetic representations of fundamental groups {II}: finiteness.
	\newblock {\em arXiv preprint arXiv:1809.03524}, 2018.
	
	\bibitem[LSYZ19]{LSYZ14}
	Guitang {Lan}, Mao {Sheng}, Yanhong {Yang}, and Kang {Zuo}.
	\newblock {Uniformization of $p$-adic curves via Higgs-de Rham flows.}
	\newblock {\em {J. Reine Angew. Math.}}, 747:63--108, 2019.
	
	\bibitem[LSZ19]{LSZ13a}
	Guitang Lan, Mao Sheng, and Kang Zuo.
	\newblock Semistable {H}iggs bundles, periodic {H}iggs bundles and
	representations of algebraic fundamental groups.
	\newblock {\em J. Eur. Math. Soc. (JEMS)}, 21(10):3053--3112, 2019.
	
	\bibitem[Poo04]{poonenbertini}
	Bjorn Poonen.
	\newblock Bertini theorems over finite fields.
	\newblock {\em Ann. of Math. (2)}, 160(3):1099--1127, 2004.
	
	\bibitem[SYZ17]{SYZ17}
	Ruiran Sun, Jinbang Yang, and Kang Zuo.
	\newblock {Projective crystalline representations of {\'e}tale fundamental
		groups and twisted periodic Higgs-de Rham flow}.
	\newblock {\em arXiv preprint arXiv:1709.01485}, 2017.
	
\end{thebibliography}
 \end{document}